\newtheorem{lemma}{Lemma}[section]
\newtheorem{theorem}[lemma]{Theorem}
\newtheorem{proposition}[lemma]{Proposition}
\theoremstyle{definition}
\newtheorem{example}[lemma]{Example}
\newtheorem{definition}[lemma]{Definition}
\newtheorem{remark}[lemma]{Remark}
\numberwithin{equation}{section}
\newcommand{\leaveout}[1]{}
\begin{document}

\title{Optimal approximation of discrete-time multirate systems on Hilbert spaces}

\author{Mikael Kurula}
\ead{Mikael.Kurula@abo.fi}
\address{\AA bo Akademi Mathematics, Domkyrkotorget 1, 20500 \AA bo, Finland}

\begin{abstract}
We study discrete-time $(m,n)$-multirate systems on separable Hilbert spaces, solving the problem of approximating such a system by one which has a shorter multirate period $(m/q,n/q)$, optimally in the Hilbert-Schmidt norm. We work in the state-space setting, providing two state-space representations of the optimal approximant which are expressed in terms of a state-space representation of the original system.
\end{abstract}

\maketitle

\begin{keyword} Multirate system\sep $(m,n)$-periodic system\sep discrete time\sep approximation\sep Hilbert-Schmidt norm\sep harmonic transfer function\sep alias component analysis\sep Hilbert space
\end{keyword}

\section{Introduction}

Control theory for periodic linear systems remains an active and interesting research field. Examples of classical applications that have motivated the development of the theory are vibration alleviation in motors and helicopter rotors, satellite attitude control, economics and finance \cite{BiCoBook}, modeling of rotating devices in electrical engineering \cite{ToHi15}, and multirate signal processing \cite{Vaid93,ChQiu97}. A recent, important application is developing control theory for the emerging smart electrical grid \cite{RYCNdSDC13}. Multirate systems are a generalization of periodic systems where, intuitively speaking, the input part of the system has a period different from the output part of the system.

In this paper, we solve the problem of approximating an $(m,n)$-multirate system in discrete time by a $(m',n')$-multirate system, where $m'<m$, but $m'/n'=m/n$. As measure of distance between two systems of this class, we use the Hilbert-Schmidt norm of the difference of the input/output maps of the systems, an extension of the classical $H^2$ norm to periodic systems, originating in \cite{BaPe92}.

This problem was solved for finite-dimensional systems in a purely input/output setting in \cite{MeCh00}, and here we extend their work to the case of $(m,n)$-multirate systems on separable Hilbert spaces, giving state-space representations of the optimal approximant. Moreover, we give a minimal, self-contained, and rigorous exposition which uses a standard periodic central system instead of switched LTI (linear, time-invariant) systems.

Pivotal steps in the development are:
\begin{enumerate}
\item formulating an $(m,n)$-multirate system $\Sigma$ in terms of a central, periodic system, an \emph{upsampler} (also called \emph{expander}) and a \emph{downsampler} (a \emph{decimator}),
\item describing the structure of the harmonic transfer function, which was pioneered by Wereley in his PhD thesis \cite{WereleyPhd} in the case of periodic systems, in the present setting with a multirate system $\Sigma$,
\item extending the Hilbert-Schmidt norm to the Hilbert-space setting, where one cannot simply take the ele\-ment-wise scalar $H^2$ norm,
\item solving the approximation problem in the frequency domain, using harmonic transfer functions, and
\item giving state-space representations of an optimal approximant $\widetilde\Sigma$, in terms of a state-space representation of the original system $\Sigma$.
\end{enumerate}

Recently, state-space representations for an optimal \emph{LTI} approximation of finite-dimensional \emph{periodic} systems, were obtained by Toivonen and Hietarinta in \cite{ToHi15}, using the Floquet transformation and techniques closely related to those used in this paper. However, not every periodic discrete-time system has a Floquet transform, see \cite{vDs93,GKK99}. We give analogues of some of the results in \cite{ToHi15} that apply in the general case. As expected, our conclusions are weaker than in \cite{ToHi15}, but on the other hand, we consider approximants of arbitrary shorter period, and for more general multirate systems. The presentation here uses ideas from \cite{ChQiu97}, with additional inspiration from \cite{MeCh00}, but here we treat the infinite-dimensional case, in a more mathematical style.

In \S\ref{sec:multirate}, we define a multirate system, and the \emph{frequency lifting technique} \cite{BiCoBook} is the topic of \S\ref{sec:transfer}; see in particular Thm \ref{thm:freqlift}. Finally, in \S\ref{sec:approx}, we solve the approximation problem, in both the frequency and the time domain.

\section{Multirate state-space systems}\label{sec:multirate}

Before we proceed to define a multirate state-space system, we need to define the $q$-\emph{upsampling operator}
$$
	(\uparrow_{q}\!v)_t:=\left\{\begin{aligned}
		v_{t/q},&\quad t/q\in\mathbb Z, \\ 
		0,&\quad t/q\not\in\mathbb Z,
	\end{aligned}\right.
$$
and the $q$-\emph{downsampling operator} $(\downarrow_{q}\!v)_t=v_{qt}$, for any sequence $v$, for any $q\in\mathbb Z^+:=\{1,2,\ldots\}$, and for all $t\in\mathbb Z$ such that these formulas make sense. 

We will study time-varying systems with the structure
\begin{equation}\label{eq:isoreporig}
	\left\{\begin{aligned}
		x_{t+1}&=A_t\,x_t+B_t\,u_{t}^\circ, \\
		y_{t}^\circ&=C_t\,x_t+D_t\,u_t,\qquad t\in\mathbb T, \\ 
		u^\circ&=\,\uparrow_p u,\quad
		y=\,\downarrow_q y^\circ,
	\end{aligned}\right.
\end{equation}
where $A_t$, $B_t$, $C_t$ and $D_t$ are sequences of bounded linear operators that map between the separable Hilbert spaces $U$ (the \emph{input space}), $X$ (the \emph{state space}), and $Y$ (the \emph{output space}), 
$$
	\begin{bmatrix} A_t&B_t\\C_t&D_t\end{bmatrix}:
	\begin{bmatrix} X\\U\end{bmatrix}\to
	\begin{bmatrix} X\\Y\end{bmatrix},\quad t\in\mathbb T,
$$ 
$p,q\in\mathbb Z^+$, and $\mathbb T\subset\mathbb Z$ is some appropriate time set. The system with input $u$ and output $y$ is referred to as the \emph{physical system}, while the system with input $u^\circ$ and output $y^\circ$ is called the \emph{central system} or \emph{underlying system}.

For any sequence $v$ indexed by $\mathbb Z$ or $\mathbb N_0:=\{0,1,\ldots\}$, we define the \emph{backward shift} operator $\sigma$ by $(\sigma v)_k:=v_{k+1}$, for all $k$ in the index set. Next we denote by $H$ the linear operator which, in \eqref{eq:isoreporig} with $\mathbb T=\mathbb N_0$, maps the input sequence $u\in U^{\mathbb N_0}$ into the output sequence $y\in Y^{\mathbb N_0}$ that corresponds to the \emph{initial state} $x(0)=0$. We will now find general sufficient conditions for the state-space system \eqref{eq:isoreporig} to satisfy the characteristic condition $\sigma^mH=H\sigma^n$, i.e., that shifting the input $n$ steps backwards in time gives the same result as instead shifting the output $m$ steps, when the initial state is in both cases equal to zero. 

Since $\uparrow_p\!\sigma^n=\sigma^{pn}\!\uparrow_p$, the signal $u^\circ$ is shifted  $pn$ steps if $u$ is shifted $n$ steps, and this should more or less have the same effect as shifting $y^\circ$ a total of $qm$ steps. This is naturally achieved if $pn=qm$ and the central system is $pn$-periodic, i.e., $A_{t+pn}=A_t$ for all $t,t+pn\in\mathbb T$, and similar for $B_t$, $C_t$ and $D_t$. The smallest rates for the upsampler and downsampler, which in general work in this setup, are obtained when $pn=qm={\rm lcm}\,(m,n)$. If we set $c:=\mathrm{gcd}\,(m,n)$, where $m,n\in\mathbb Z^+$, then $m/n=\overline m/\overline n$, with $\overline m:=m/c$ and $\overline n:=n/c$ co-prime. Then, moreover, ${\rm lcm}\,(m,n)=m\overline n$, i.e., we should choose $p=\overline m$ and $q=\overline n$. 

\begin{definition}
We call the system \eqref{eq:isoreporig}, where $p=\overline m$, $q=\overline n$ and the central system is $m\overline n$-periodic, an \emph{(m,n)-multirate (state-space) system}.
\end{definition}

It is easily verified that an $(m,n)$-multirate system is also $(km,kn)$-multirate for all $k\in\mathbb Z^+$, but all such systems have a unique smallest multirate pair $(m',n')$ with $m'/n'=m/n$ and $m'\leq m$, $n'\leq n$. Multirate systems are sometimes referred to as $(m,n)$-periodic systems. Indeed, if $n=m$, then $\overline m=\overline n=1$ and the upsampler and downsampler in \eqref{eq:isoreporig} both reduce to identity operators, so that the multirate system reduces to a standard discrete-time $m$-\emph{periodic} system. If additionally $n=m=1$, then we have a \emph{linear time-invariant (LTI)} system. The term \emph{multirate} refers to real-time applications. Indeed, let the signal $u$ be updated at a rate $f_0$ Hz. Then the $m\overline n$-periodic central system on the first two lines of \eqref{eq:isoreporig} is updated at a rate of $\overline mf_0$ Hz, and the output $y$ is then updated at the rate $\overline mf_0/\overline n$, i.e., the full system operates at two or three different rates if $n\neq m$.

\section{Frequency lifting}\label{sec:transfer}

The approximation problem will later be solved using the \emph{frequency lifting} (or \emph{alias component analysis}) technique; see \cite[\S6.4]{BiCoBook}, \cite{WereleyPhd} or \cite{ChQiu97}.

\begin{definition}\label{def:EMP}
By a \emph{$(T,z)$-exponentially modulated polynomial (EMP)}, we mean a function
\begin{equation}\label{eq:EMP}
	v_t=z^{-t}\,\sum_{k=0}^{T-1} \widehat v_k\,e^{2\pi jtk/T},
		\qquad t\in\mathbb Z,
\end{equation}
where $z\in\mathbb C\setminus\{0\}$, $j$ denotes the imaginary unit, and the coefficients $\widehat v_k$ are from some arbitrary vector space $V$. 
\end{definition}

EMPs allow us to extend the transfer function of an LTI system (the case $T=1$) to what is called the \emph{harmonic transfer function} (or the \emph{alias component representation}) of a periodic system. The first step is to observe some fundamental properties of EMPs. In this paper, we identify sequences with column vectors, $v=\mathrm{col}_{k\in\mathcal I}\,(v_k)$, where $\mathcal I\subset\mathbb Z$ is some appropriate index set.

A straightforward inversion of \eqref{eq:EMP} leads to
\begin{equation}\label{eq:EMPinv2}
	\widehat v_k=\frac 1T\sum_{t=0}^{T-1} {v_t}{z^t}\,e^{-2\pi jtk/T},\qquad k=0,\ldots,T-1,
\end{equation}
and so the EMP-Fourier coefficients $\widehat v_k$ are uniquely determined by $v$ in \eqref{eq:EMP}. Moreover, $v$ is $(T,z)$-EMP if and only if $t\mapsto z^t\,v_t$ is $T$-periodic, and $\widehat v_k$ is the standard $T$-periodic DFT of $t\mapsto z^t\,v_t$. Therefore, we denote \eqref{eq:EMP}--\eqref{eq:EMPinv2} as $\widehat v=\mathcal F_{T,z} v$, in particular $\mathcal F_{T,1}$ is the standard $T$-periodic DFT. Furthermore, a $(T,z)$-EMP $v$ is uniquely determined by its restriction to any subset containing a set $\{\ell,\ldots,\ell+T-1\}$, $\ell\in\mathbb Z$, and we identify $v$ with all such restrictions, hence also calling all these restrictions $(T,z)$-EMPs. In particular, we identify a $T$-periodic sequence with its restriction to $\{0,\ldots, T-1\}$. The output of $\mathcal F_{T,z}$ is the $(T,z)$-EMP with indices $t=0,\ldots,T-1$, i.e., by \eqref{eq:EMP}, 
$$
	\mathcal F_{T,z}^{-1}=\mathrm{diag}\,(z^{-t}I_V)_{0\leq t <T}\,\mathcal F_{T,1}^{-1}.
$$

\begin{lemma}\label{lem:EMP}
The following are true: 
\begin{enumerate}
\item Let $q$ be a positive integer. If $v$ is $(T,z)$-EMP then $\uparrow_q\!v$ is $(q T,w)$-EMP for any number $w$ such that $w^q=z$. Also, $\downarrow_q\!v$ is $(T/q,z^q)$-EMP, if $T/q\in\mathbb Z$.

\item For all positive integers $q$, 
$$
\begin{aligned}
	\mathcal F_{q T,z}\!\!\uparrow_q\,=& \frac1q\Pi_{T,1,q}\,\mathcal F_{T,z^q},\qquad\text{with}\\
	\Pi_{T,z,q}:=&\mathrm{col}\,(z^{kT}I_T)_{k=0}^{q-1},\quad
I_T:=\mathrm{diag}\,(I_V)_{k=0}^{T-1}.
\end{aligned}
$$

\item For all positive integers $q$, 
$$
	\mathcal F_{T,z^q}\!\downarrow_q\,=\Pi_{T,1,q}^*\,\mathcal F_{q T,z}.
$$

\item For all positive integers $q$,
$$
\begin{aligned}
\uparrow_q\!\mathcal F_{T,z}\,=&\,\mathcal F_{q T,z}\,\Pi_{T,1/z,q}\qquad\text{and} \\
\downarrow_q\!\mathcal F_{q T,z}\,=&\,\frac1q\,\mathcal F_{T,z}\,\Pi_{T,\overline z,q}^*.
\end{aligned}
$$
\end{enumerate}
\end{lemma}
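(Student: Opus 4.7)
The plan is to prove (1) using the characterization recorded right after \eqref{eq:EMPinv2}: a sequence $v$ is $(T,z)$-EMP if and only if $t\mapsto z^tv_t$ is $T$-periodic. For $\uparrow_q\!v$ with $w^q=z$, the product $w^t(\uparrow_q\!v)_t$ vanishes unless $q\mid t$, and at $t=qs$ equals $w^{qs}v_s=z^sv_s$; $T$-periodicity in $s$ then immediately gives $qT$-periodicity in $t$. For $\downarrow_q\!v$, $(z^q)^tv_{qt}=z^{qt}v_{qt}$ is clearly $(T/q)$-periodic in $t$ provided $T/q\in\mathbb Z$.

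For (2)--(4), the plan is direct computation from \eqref{eq:EMP}--\eqref{eq:EMPinv2}, everywhere leveraging the root-of-unity identity
\[
\sum_{k=0}^{q-1}e^{2\pi jsk/q}=\begin{cases}q,& q\mid s,\\ 0,& \text{otherwise.}\end{cases}
\]
For (2), expanding $(\mathcal F_{qT,z}\!\uparrow_q\!v)_\ell$, only $t=qs$ contribute to the $t$-sum, and writing $\ell=jT+k$ with $0\le j<q$, $0\le k<T$ reduces the exponential to $e^{-2\pi jsk/T}$; the result is $\tfrac1q(\mathcal F_{T,z^q}v)_k$ independent of $j$, which is exactly how $\tfrac1q\Pi_{T,1,q}$ replicates its input. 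For (3), compute $(\Pi_{T,1,q}^*\mathcal F_{qT,z}v)_k=\sum_{j=0}^{q-1}(\mathcal F_{qT,z}v)_{jT+k}$, swap the order of summation, and use the identity on the $j$-sum to select $s\equiv0\pmod{q}$; the surviving $s$-sum is $(\mathcal F_{T,z^q}\!\downarrow_q\!v)_k$.

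Item (4) is analogous. For $\mathcal F_{qT,z}\Pi_{T,1/z,q}$, the factors $z^{-kT}$ from $\Pi$ combine with $z^{kT}$ from the kernel of $\mathcal F_{qT,z}$, and the $k$-sum forces the output to vanish unless $q\mid\ell$, recovering $\uparrow_q\!\mathcal F_{T,z}$. For the second formula, the adjoint conjugates the scalar, so $\Pi_{T,\overline z,q}^*=\mathrm{row}(z^{kT}I_T)_{k=0}^{q-1}$; relabelling $t=kT+\ell$ collapses the nested sums over $k\in\{0,\dots,q-1\}$ and $\ell\in\{0,\dots,T-1\}$ into a single sum over $t\in\{0,\dots,qT-1\}$, with $e^{-2\pi j\ell s/T}=e^{-2\pi jts/T}$ since $\ell\equiv t\pmod{T}$, producing $(\downarrow_q\!\mathcal F_{qT,z}v)_s$ up to the factor $1/q$. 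The main obstacle is purely bookkeeping: correctly interpreting the block structure of $\Pi_{T,z,q}$ on $V^T$-valued tuples, tracking the complex conjugation in the adjoint (so that $\Pi_{T,\overline z,q}^*$ carries $z^{kT}$, not $\overline z^{kT}$), and keeping $T$- and $qT$-length tuples distinct when $\mathcal F$ outputs short tuples that are then either zero-padded by $\uparrow_q$ or concatenated by $\Pi$. Once these conventions are pinned down, every identity collapses to the same root-of-unity cancellation.
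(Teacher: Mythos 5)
Your proof is correct, and your overall strategy---direct computation from \eqref{eq:EMP}--\eqref{eq:EMPinv2}---matches the paper's, but you and the paper make dual choices at a couple of points that are worth noting. For items (1) and (3), the paper expands $(\downarrow_q w)_t = w_{qt}$ via the \emph{synthesis} formula \eqref{eq:EMP} and obtains $\Pi_{T,1,q}^*$ by regrouping the index sum $\sum_{k=0}^{qT-1}$ into $\sum_{k=0}^{T-1}\sum_{\ell=0}^{q-1}$, which simultaneously exhibits $\downarrow_q w$ as a $(T/q,z^q)$-EMP (giving item 1 as a by-product) and yields item 3---with no root-of-unity cancellation needed. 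You instead compute $\Pi_{T,1,q}^*\mathcal F_{qT,z}v$ via the \emph{analysis} formula \eqref{eq:EMPinv2}, swap summation order, and use $\sum_{j=0}^{q-1}e^{-2\pi jsj/q}$ to kill all $s$ not divisible by $q$. Both arguments are one-liners once the index bookkeeping is pinned down; the paper's version is marginally cleaner because the selection of indices happens by re-indexing rather than by cancellation, and because it proves items 1 and 3 in a single sweep. Your item 1 proof is arguably more transparent, though: you invoke the stated characterization ``$v$ is $(T,z)$-EMP iff $t\mapsto z^t v_t$ is $T$-periodic'' directly, rather than extracting EMP-hood as a side effect of the Fourier computations. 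For item 4 you spell out the computation that the paper dismisses as ``analogous,'' and your observation that the adjoint conjugation causes $\Pi_{T,\overline z,q}^*$ to carry $z^{kT}$ rather than $\overline z^{kT}$ is exactly the subtle point that makes the second formula come out right. No gaps.
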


Hence, upsampling in time (frequency) domain corresponds to extension by copying in frequency (time) domain, and downsampling in one domain is decimation by summation in the other.

\begin{proof}
By \eqref{eq:EMPinv2} it holds that $\widehat v=\mathcal F_{q T,z}\!\uparrow_q \!w$ if and only if for all $k=0,\ldots,q T-1$,
$$
\begin{aligned}	
	\widehat v_k&=\frac 1{q T}\sum_{t=0}^{q T-1} {(\uparrow_q \!w)_t}{z^t}\,e^{-2\pi jtk/q T} \\
	&=\frac 1q \frac 1T\sum_{s=0}^{T-1} {w_s}{z^{q s}}\,e^{-2\pi jsk/T} 
	= \frac 1q  (\mathcal F_{T,z^q }w)(k\,\mathrm{mod}\,T),
\end{aligned}
$$
which proves item two and the first part of item one.

From \eqref{eq:EMP}, we get for $\widehat w=\mathcal F_{q T,z}w$ and $t\in\mathbb Z$ that
$$
\begin{aligned}
	(\downarrow_q \!w)_t&=w_{q t} = z^{-q t}\,\sum_{k=0}^{q T-1} \widehat w_k\,e^{2\pi jq tk/q T} \\
	&= z^{-q t}\,\sum_{k=0}^{T-1} e^{2\pi jtk/T}\,
		\sum_{\ell=0}^{q -1} \widehat w_{k+\ell T}\\
	&= (\mathcal F_{T,z^q }^{-1} \Pi_{T,1,q }^*\widehat w)_t,
\end{aligned}
$$
and items one and three are established. The proof of item four is analogous to items two and three, with the roles of \eqref{eq:EMP} and \eqref{eq:EMPinv2} swapped.
\end{proof}

Let $A_t$ be $T$-periodic and write its Fourier expansion
\begin{equation}\label{eq:sysfourier}
	A_t = \sum_{k =0}^{T-1} \widehat A_k \,e^{2\pi j tk /T},
	\qquad t\in\mathbb Z.
\end{equation}
By \eqref{eq:EMPinv2}, the Fourier coefficients $\widehat A_k=(\mathcal F_{T,1}A)_k$ are bounded linear operators on $X$; here $A=\mathrm{col}_{t\in\mathbb Z}\, (A_t)$. Next define the \emph{Toeplitz transform with period $T$} of $A_t$,
\begin{equation}\label{eq:AtoepD}
	\mathcal T_T(A):=\left[\begin{matrix}
		\widehat A_0 & \widehat A_{T-1}  & \ldots & \widehat A_{1} \\
		\widehat A_1 & \widehat A_0 & \ldots & \widehat A_{2} \\ 
		\vdots & \vdots & \ddots & \vdots \\ 	
		\widehat A_{T-1} & \widehat A_{T-2} &\cdots & \widehat A_0
	\end{matrix}\right],
\end{equation}
which is a bounded linear operator on $X^T$. Clearly, the first column in $\mathcal A_T$ is the Fourier transform of $A_t$, and the next column is obtained by applying the forward $T$-periodic shift $\sigma_T^{-1}$, where
$$
	\sigma_T \left[\begin{matrix}v_0\\v_1\\\vdots\\v_{T-1}\end{matrix}\right]
	:=\left[\begin{matrix}v_1\\\vdots\\v_{T-1}\\v_0\end{matrix}\right];
$$
also note that
\begin{equation}\label{eq:sigmaAscKomm}
	\sigma_T\mathcal T_T(A)=\mathcal T_T(A)\sigma_T.
\end{equation}

\begin{remark}\label{rem:SwissCheese}
If $A_t$ is in fact $T/q$-periodic, where $q$ is a positive integer, then $A_t$, $0\leq t\leq T-1$, consists of $q$ copies of some $T/q$-periodic $a_t$, i.e., 
$$
	\mathrm{col}_{0\leq t<T}\,(A_t)=\Pi_{T/q,1,q}\,\mathrm{col}_{0\leq t<T/q}\,(a_t),
$$
and then item 4 of Lemma \ref{lem:EMP} gives that
$$
	\mathcal F_{T,1}A=\mathcal F_{T,1}\Pi_{T/q,1,q}a=
	\uparrow_q\mathcal F_{T/q,1} a,
$$
i.e.,
$$
	\mathrm{col}_{0\leq k<T}\,(\widehat A_k)=\uparrow_q\!\mathrm{col}_{0\leq k<T/q}\,(\widehat a_k).
$$
In particular, all Fourier coefficients $\widehat A_k$, whose indices $k$ are not multiples of $q$, are zero in this case, and from \eqref{eq:AtoepD},
\begin{equation}\label{eq:SwissInt}
\begin{aligned}
	\mathcal T_T(A)\!\uparrow_q&=\uparrow_q\!\mathcal T_{T/q}(a) \qquad\text{and}\\
	\downarrow_q\!\mathcal T_T(A)&=\mathcal T_{T/q}(a)\!\downarrow_q.
\end{aligned}
\end{equation}

Hence, $A_t$ is $T/q$-periodic if and only if the Toeplitz transform $\mathcal T_T(A)$ of $A_t$ has only zero entries except possibly in constant bands at distance $q$, with these bands centered on the main diagonal. Then the Toeplitz matrix corresponding to period $T/q$ is obtained by removing zero bands from the Toeplitz matrix of period $T$ and truncating after $T/q$ rows, so that the resulting operator is square.

The above characterization of shorter periodicity is applicable to all the operator sequences $A_t$ -- $D_t$ of a periodic system and it gives rise to an obvious characterization of when a full $T$-periodic system is in fact periodic with a shorter period $T/q$. In particular, a system is LTI if and only if all its four Toeplitz transforms $\mathcal A - \mathcal D$ are block diagonal; this follows by taking $q=T$.
\end{remark}

\begin{remark}\label{rem:lifting}
We now explain the main reason for the interest in the Toeplitz transform \eqref{eq:AtoepD}. Pick the $T$-periodic sequence $A_t$ of bounded operators on $X$ and the $X$-valued $(T,z)$-EMP $x_t$ arbitrarily. Set $\mathbf x:=\mathrm{col}\,(\mathcal F_{T,z}\,x)$, a column with $T$ entries. Combining \eqref {eq:sysfourier} with \eqref{eq:EMP} for $x$, we get
\begin{equation}\label{eq:prodhat}
	A_t\,x_t=z^{-t}\,\sum_{r=0}^{T-1}e^{2\pi j tr/T}\sum_{\ell=0}^{T-1} 
	\widehat A_{r-\ell} \,\widehat x_\ell,
\end{equation}
and this equals $x_{t+1}$ if and only if
$$
	z^{-t}\,\sum_{r=0}^{T-1}e^{2\pi j tr/T}\left(
	\sum_{\ell=0}^{T-1} \widehat A_{r-\ell} \,\widehat x_\ell
		- \frac{e^{2\pi jr/T}}{z}\widehat x_r\right)=0.
$$
Assuming this for all $0\leq t<T$, we get from \eqref{eq:EMPinv2} that
\begin{equation}\label{eq:FinvAx}
	e^{2\pi jr/T}\widehat x_r = 
	z\sum_{\ell=0}^{T-1} \widehat A_{r-\ell} \,\widehat x_\ell,
	\qquad 0\leq r<T;
\end{equation}
then in fact $x_{t+1}=A_t\,x_t$ for all $t\in\mathbb Z$ by the above.

The condition \eqref{eq:FinvAx} can be written more compactly as $	\mathcal N_T\,\mathbf x=z\,\mathcal T_T(A)\,\mathbf x$, where $\mathcal N_T$ is the unitary operator
\begin{equation}\label{eq:Ndef}
	\mathcal N_T:=\mathrm{diag}\,(\epsilon^kI_X)_{k=0}^{T-1}, \quad \epsilon:=e^{2\pi j/T}.
\end{equation}
Assuming that $T,q,T/q\in\mathbb Z^+$, one easily verifies that
\begin{equation}\label{eq:NSwissInt}
	\mathcal N_T\!\uparrow_q=\uparrow_q\!\mathcal N_{T/q}\qquad\text{and}\qquad
	\downarrow_q\!\mathcal N_T=\mathcal N_{T/q}\!\downarrow_q.
\end{equation}
\end{remark}

We next introduce the \emph{harmonic resolvent set} for an operator $\mathcal A$ on $X^T$:
$$
	\rho_{T}(\mathcal A):=\{z\in\mathbb C \mid 
		\text{$\mathcal N_T-z\mathcal A$ has a bounded inverse}\}.
$$
As $\mathcal N:=\mathcal N_T$ has the bounded inverse $\mathcal N^*$ and
\begin{equation}\label{eq:ressplit}
	\mathcal N-z\mathcal A=z\mathcal N(I_T/z-\mathcal N^*\mathcal A)
		=z\,(I_T/z-\mathcal A\mathcal N^*)\mathcal N,
\end{equation}
where $I_T$ is the identity on $X^T$, it holds that $z\in\rho_{T}(\mathcal A)\setminus\{0\}$ if and only if $1/z$ is in the common, standard resolvent set of $\mathcal N^*\mathcal A$ and $\mathcal A\,\mathcal N^*$. It is always the case that $0\in\rho_T(\mathcal A)$.

\begin{proposition}\label{prop:resinv}
The harmonic resolvent set has the following properties:
\begin{enumerate}
\item The set $\rho_T(\mathcal A)$ is open and it contains a neighborhood of the origin; in particular it is nonempty.

\item If $\mathcal A=\mathcal T_T(A)$ for some sequence $A_t$, then we have the rotational symmetry $e^{2\pi j/T}\rho_{T}(\mathcal A)=\rho_{T}(\mathcal A)$. 
\end{enumerate}
\end{proposition}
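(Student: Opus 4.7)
The plan is to reduce each item to a standard fact. For item one, I exploit that $\mathcal N:=\mathcal N_T$ is unitary, and use the factorization \eqref{eq:ressplit} to convert invertibility of $\mathcal N-z\mathcal A$ into invertibility of $I_T/z-\mathcal N^*\mathcal A$. For item two, I produce a similarity transformation implemented by $\sigma_T$ that converts $\mathcal N-z\mathcal T_T(A)$ into a scalar multiple of $\mathcal N-\epsilon^{-1}z\,\mathcal T_T(A)$, where $\epsilon:=e^{2\pi j/T}$.

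For item one, I first locate a neighborhood of the origin: writing $\mathcal N-z\mathcal A=\mathcal N(I_T-z\mathcal N^*\mathcal A)$ and using a Neumann series, $I_T-z\mathcal N^*\mathcal A$ is boundedly invertible whenever $|z|\,\|\mathcal A\|<1$, so the open disk of radius $1/\|\mathcal A\|$ (or all of $\mathbb C$ if $\mathcal A=0$) lies in $\rho_T(\mathcal A)$. For openness away from $0$, the second equality in \eqref{eq:ressplit} states that for $z\neq 0$, $z\in\rho_T(\mathcal A)$ iff $1/z$ lies in the standard resolvent set of $\mathcal N^*\mathcal A$; since that resolvent set is open and $z\mapsto 1/z$ is a homeomorphism on $\mathbb C\setminus\{0\}$, the nonzero part of $\rho_T(\mathcal A)$ is open. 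Combined with the disk around $0$, item one follows.

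For item two, the key is the conjugation identity $\sigma_T\mathcal N=\epsilon\mathcal N\sigma_T$, which I would verify by a direct coordinatewise computation on $X^T$: applying $\sigma_T$ shifts the block index by one, which turns the diagonal entry $\epsilon^k I_X$ of $\mathcal N$ into $\epsilon^{k+1}I_X$, the wrap-around at $k=T-1$ being consistent because $\epsilon^T=1$. This commutation, together with \eqref{eq:sigmaAscKomm}, yields
\begin{equation*}
\sigma_T\bigl(\mathcal N-z\mathcal T_T(A)\bigr)\sigma_T^{-1}=\epsilon\mathcal N-z\mathcal T_T(A)=\epsilon\bigl(\mathcal N-\epsilon^{-1}z\,\mathcal T_T(A)\bigr).
\end{equation*}
Because $\sigma_T$ is a bounded bijection on $X^T$ and $\epsilon\neq 0$, the left-hand side is boundedly invertible iff $\mathcal N-\epsilon^{-1}z\,\mathcal T_T(A)$ is, i.e.\ $z\in\rho_T(\mathcal A)$ iff $\epsilon^{-1}z\in\rho_T(\mathcal A)$, which rearranges to the claimed rotational symmetry $\epsilon\,\rho_T(\mathcal A)=\rho_T(\mathcal A)$. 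The only step that genuinely deserves care is establishing the commutation $\sigma_T\mathcal N=\epsilon\mathcal N\sigma_T$; everything else is either recorded just before the proposition or is straightforward perturbation theory.
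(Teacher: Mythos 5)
Your proof is correct and follows essentially the same route as the paper: item one via the factorization \eqref{eq:ressplit} (you add the explicit Neumann-series bound for the disk around the origin, which the paper leaves implicit), and item two via the conjugation by $\sigma_T$ using $\sigma_T\mathcal N_T=\epsilon\mathcal N_T\sigma_T$ together with \eqref{eq:sigmaAscKomm}, exactly as in \eqref{eq:resintertw}. The only blemish is a harmless mislabeling: it is the first equality in \eqref{eq:ressplit} that produces $\mathcal N^*\mathcal A$, the second produces $\mathcal A\mathcal N^*$.
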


\begin{proof}
The harmonic resolvent set inherits the property of being open from the standard resolvent set by the discussion before the proposition, and clearly $0\in\rho_T(\mathcal A)$. This proves the first claim. 

By the assumption on $\mathcal A$ and \eqref{eq:sigmaAscKomm}, $\sigma_T\mathcal A=\mathcal A\sigma_T$, and by the definition \eqref{eq:Ndef} of $\mathcal N_T$, moreover
$
	\sigma_T\mathcal N_T=\epsilon\,\mathcal N_T\sigma_T.
$
These equalities imply that
\begin{equation}\label{eq:resintertw}
	\epsilon\,\mathcal N_T-z\mathcal A=
	\sigma_T\big(\mathcal N_T-z\mathcal A\big)\sigma_T^{-1},
\end{equation}
since $\sigma_T$ and its inverse are bounded, and so $z/\epsilon\in\rho_T(\mathcal A)$ if and only if $z\in\rho_T(\mathcal A)$.
\end{proof}

We are now in a good position to describe the frequency-domain behaviour of a multirate system.

\begin{theorem}\label{thm:freqlift}
Let $\Sigma$ be an $(m,n)$-multirate system as in \eqref{eq:isoreporig}, and let the input signal $u$ be $(n,z^{\overline m})$-EMP with $0\neq z\in\rho_{m\overline n}(\mathcal A)$. Define $\mathcal A:=\mathcal T_{m\overline n}(A)$, $\mathcal B:=\mathcal T_{m\overline n}(B)$, $\mathcal C:=\mathcal T_{m\overline n}(C)$ and $\mathcal D:=\mathcal T_{m\overline n}(D)$, and set $\mathbf u:=\mathcal F_{n,z^{\overline m}}u$.
\begin{enumerate}
\item There is a unique initial state $x_0$ such that the corresponding signals $u^\circ$, $x$ and $y^\circ$ of the central system of $\Sigma$, with $x(0)=x_0$, are $(m\overline n,z)$-EMP, and the downsampled external output $y$ is $(m,z^{\overline n})$-EMP. 

\item For $x_0$ as in item one, for $\mathbf u^\circ:=\mathcal F_{m\overline n,z}u^\circ$, $\mathbf x:=\mathcal F_{m\overline n,z}x$, $\mathbf y^\circ:=\mathcal F_{m\overline n,z}y^\circ$, and $\mathbf y:=\mathcal F_{m,z^{\overline n}}y$, it holds that
\begin{equation}\label{eq:freqlift}
	\left\{\begin{aligned} \mathcal N_{m\overline n}\,\mathbf x &=z\mathcal A\mathbf x+z\mathcal B\mathbf u^\circ,\\
	\mathbf y^\circ &=\mathcal C\mathbf x+\mathcal D\mathbf u^\circ,\\
		\mathbf u^\circ&=\Pi_{n,1,\overline m}\,\mathbf u/\overline m, \\
		 \mathbf y&=\Pi_{m,1,\overline n}^*\, \mathbf y^\circ.
\end{aligned}\right.
\end{equation}
Conversely, if $\mathbf u^\circ$, $\mathbf x$, $\mathbf y^\circ$ and $\mathbf y$ satisfy \eqref{eq:freqlift}, then the corres\-ponding time domain sequences satisfy \eqref{eq:isoreporig} and $x(0)$ equals the sum of the elements in $\mathbf x$.

\item For $z\in\rho_{m\overline n}(\mathcal A)$, $\mathbf u$ uniquely determines $\mathbf u^\circ$, $\mathbf x$, $\mathbf y^\circ$ and $\mathbf y$ in \eqref{eq:freqlift}. In particular, the map $\mathbf u\mapsto \mathbf y$ equals
\begin{equation}\label{eq:Ttransf}
	{\mathcal G}(z):=\Pi_{m,1,\overline n}^*\big(z\mathcal C(\mathcal N_{m\overline n} -z\mathcal A)^{-1}\mathcal B+\mathcal D\big)\Pi_{n,1,\overline m}/\overline m.
\end{equation}
\end{enumerate}
\end{theorem}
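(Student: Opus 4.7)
The plan is to prove the three items in sequence, leveraging the EMP calculus of Lemma \ref{lem:EMP} and the frequency-lifting computation already carried out in Remark \ref{rem:lifting}. The overall strategy is to track the $(T,z)$-EMP type of each signal as it passes through the upsampler, the $m\overline n$-periodic central system, and the downsampler, and to recognize that requiring $x$ to be a $(m\overline n, z)$-EMP forces exactly a matrix equation involving $\mathcal N_{m\overline n} - z\mathcal A$, which by the hypothesis on $z$ has a unique solution.

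For item 1, I first apply Lemma \ref{lem:EMP}(1) to the upsampling step: since $u$ is $(n, z^{\overline m})$-EMP, $u^\circ = \uparrow_{\overline m}\! u$ is $(\overline m\cdot n, z)$-EMP, and $\overline m\, n = m\overline n$. I then repeat the computation of Remark \ref{rem:lifting} with the forcing term $B_t u_t^\circ$ added. Since $A_t, B_t$ are $m\overline n$-periodic and $x_t, u_t^\circ$ are postulated $(m\overline n, z)$-EMPs, the state equation reduces, via the same Fourier expansion and \eqref{eq:EMPinv2}, to $\mathcal N_{m\overline n}\mathbf x = z\mathcal A\mathbf x + z\mathcal B\mathbf u^\circ$, which has a unique $\mathbf x$ because $z \in \rho_{m\overline n}(\mathcal A)$; the required $x_0$ is the $t=0$ value of the corresponding EMP, namely the sum of the entries of $\mathbf x$. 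The output $y^\circ$ is then automatically a $(m\overline n, z)$-EMP as a sum of products of $m\overline n$-periodic sequences with such EMPs, and finally $y = \downarrow_{\overline n} y^\circ$ is $(m, z^{\overline n})$-EMP by Lemma \ref{lem:EMP}(1).

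For item 2, the first two equations of \eqref{eq:freqlift} come from applying $\mathcal F_{m\overline n, z}$ to the state and output equations of the central system; the state derivation is precisely the argument in Remark \ref{rem:lifting} with the forcing term retained, while the output version is its purely algebraic analogue (no $z$ factor, no $\mathcal N_{m\overline n}$). The third equation is Lemma \ref{lem:EMP}(2) applied with $T=n$, $q=\overline m$, and the fourth is Lemma \ref{lem:EMP}(3) applied with $T=m$, $q=\overline n$. For the converse direction, I would invert each Fourier transform to produce EMPs of the correct types and then traverse each derivation backwards; the identity $x(0) = \sum_k \widehat x_k$ then falls out of evaluating the EMP formula \eqref{eq:EMP} at $t=0$.

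Item 3 is a short algebraic chase through \eqref{eq:freqlift}: using $z \in \rho_{m\overline n}(\mathcal A)$, solve $\mathbf x = z(\mathcal N_{m\overline n} - z\mathcal A)^{-1}\mathcal B\mathbf u^\circ$, substitute into the output equation, and replace $\mathbf u^\circ$ and $\mathbf y^\circ$ using the third and fourth lines of \eqref{eq:freqlift}; the factor $1/\overline m$ and the $\Pi$-operators assemble into the asserted formula for $\mathcal G(z)$. The only genuine obstacle is the bookkeeping of the $(T,z)$-pairs across the rate-changers and the matching of $\overline m n$ with $m\overline n$; once these are tracked consistently via Lemma \ref{lem:EMP}, the remainder is mechanical because the harmonic resolvent hypothesis handles the single operator inversion needed.
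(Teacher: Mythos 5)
Your proposal is correct and follows essentially the same route as the paper: apply Lemma \ref{lem:EMP} to track EMP types through the upsampler and downsampler, replay the computation of Remark \ref{rem:lifting} for the augmented state and output equations of the central system to obtain the first two lines of \eqref{eq:freqlift}, invoke the invertibility of $\mathcal N_{m\overline n}-z\mathcal A$ for uniqueness and for the explicit resolvent formula, and read off $x(0)$ by evaluating the inverse EMP transform at $t=0$. The paper is slightly more explicit in that it first constructs a candidate $\mathbf x':=z(\mathcal N_{m\overline n}-z\mathcal A)^{-1}\mathcal B\mathbf u^\circ$ and verifies it solves the time-domain equations before turning to uniqueness, whereas you fold existence and uniqueness together, but the content is the same.
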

\begin{proof}
Due to Lemma \ref{lem:EMP}, $u^\circ$ is $(m\overline n,z)$-EMP, and if $y^\circ$ is of the same type, then $y$ is $(m,z^{\overline n})$-EMP. In order to prove the existence of $x_0$ in item one, set $\mathbf u^\circ:=\mathcal F_{m\overline n,z}u^\circ$ and define $\mathbf x':=z(\mathcal N_{m\overline n}-z\mathcal A\mathbf )^{-1}\mathcal B\mathbf u^\circ$, $\mathbf y':=\mathcal C \mathbf x'+\mathcal D\mathbf u^\circ$, $x':=\mathcal F_{m\overline n,z}^{-1}\,\mathbf x'$ and $y':=\mathcal F_{m\overline n,z}^{-1}\,\mathbf y'$. Then $x'$ and $y'$ are $(m\overline n,z)$-EMP and we next prove that these solve the first two equations in \eqref{eq:isoreporig}. Indeed, the first line of \eqref{eq:freqlift} holds with $\mathbf x'$ in place of $\mathbf x$, and we may use $\begin{bmatrix}A_t&B_t\end{bmatrix}$ and $\left[\begin{smallmatrix}x'_t\\u^\circ_t\end{smallmatrix}\right]$ instead of $A_t$ and $x_t$ in \eqref{eq:prodhat} with $T=m\overline n$, and continue the argument as in Rem.\ \ref{rem:lifting}, to obtain that $x'$ satisfies the first line of \eqref{eq:isoreporig}. Using \eqref{eq:prodhat} once more, for $\begin{bmatrix}C_t&D_t\end{bmatrix}$ this time, we get that $y'$ satisfies the second equation in \eqref{eq:isoreporig}. All the properties of $x_0:=x'(0)$ stated in item one, apart from uniqueness, have been shown.

Now let $u^\circ$, $x$ and $y^\circ$ be any $(m\overline n,z)$-EMPs with $z\in\rho_{m\overline n}(\mathcal A)$, and define $\mathbf u^\circ:=\mathcal F_{m\overline n,z}u^\circ$, $\mathbf x:=\mathcal F_{m\overline n,z}x$ and $\mathbf y^\circ:=\mathcal F_{m\overline n,z}y^\circ$. In fact, the argument in the previous paragraph can be slightly strengthened to prove that the first two equations of \eqref{eq:isoreporig} hold if and only if the first two equations of \eqref{eq:freqlift} hold. By Lemma \ref{lem:EMP}, equation three in \eqref{eq:isoreporig} is also equivalent to equations three and four in \eqref{eq:freqlift}. According to \eqref{eq:EMP},  the initial state $x_0=x(0)$ is uniquely determined as the sum of the elements in $\mathbf x$. Item two is established.

Now let $x_0$ be such that $x$ in \eqref{eq:isoreporig} is $(m\overline n,z)$-EMP. Then $\mathbf x:=\mathcal F_{m\overline n,z}x$ satisfies the first line of \eqref{eq:freqlift}, and since $z\in\rho_{m\overline n}(\mathcal A)$, necessarily $\mathbf x$ equals $\mathbf x'$ in paragraph one of the proof. Thus $\mathbf x$ is uniquely determined by $u^\circ$ and $z$, and the uniqueness in item one is now clear, as is also item three.
\end{proof}

We call \eqref{eq:freqlift} the \emph{frequency lifting} of \eqref{eq:isoreporig}, and ${\mathcal G}(\cdot)$ is called the \emph{(harmonic) transfer function} of $\Sigma$. Conversely, we say that \eqref{eq:isoreporig} is a \emph{realization} of $\mathcal G$. A \emph{periodic} system has $\overline m=\overline n=1$, $m=n$ and ${\mathcal G}(z):=z\mathcal C(\mathcal N_{m} -z\mathcal A)^{-1}\mathcal B+\mathcal D$. If moreover $m=n=1$, then $\mathcal G$ reduces to the standard transfer function $G(z)=zC(I_X-zA)^{-1}B+D$.

The values of the harmonic transfer function $\mathcal G$ of an $(m,n)$-multirate system are bounded linear operators from $U^n$ into $Y^m$, both separable Hilbert spaces by assumption, and it is analytic on a neighborhood of the origin; see Prop.\ \ref{prop:resinv}. Moreover, $\mathcal G$ has the following structure:

\begin{theorem}\label{thm:toep} 
Let ${\mathcal G}$ be the transfer function of an $(m,n)$-multirate system $\Sigma$, pick $z\in\rho_{m\overline n}(\mathcal A)$, where $\mathcal A:=\mathcal T_{m\overline n}(A)$, and denote the operator on row $k$ in column $0$ of ${\mathcal G}$ by $G_k$. 
\begin{enumerate}
\item Letting $\epsilon:=e^{2\pi j/m\overline n}$, we have ${\mathcal G}(z)$ equal to 
$$
\left[\begin{matrix}
G_0(z) & G_{m-1}(z/\epsilon)  & \ldots & G_{1-n}(z/\epsilon^{n-1}) \\ 
G_1(z) & G_0(z/\epsilon) & \ldots & G_{2-n}(z/\epsilon^{n-1}) \\ 
\vdots & \vdots &  & \vdots \\ 	
G_{m-1}(z) & G_{m-2}(z/\epsilon) & \cdots & G_{n}(z/\epsilon^{n-1})
\end{matrix}\right],
$$
where indices of $G$ in the last column are modulo $m$.

\item Assume now that $\Sigma$ is also $(\widetilde m,\widetilde n)$-multirate, where $\widetilde m=\widetilde c\,\overline m$ and $\widetilde n=\widetilde c \,\overline n$ with $q:=c/\widetilde c\in\mathbb Z^+$, and denote the $(\widetilde m,\widetilde n)$-period transfer function of $\Sigma$ by $\widetilde{\mathcal G}$. Then the domain of $ \mathcal G$ is contained in the domain of $\widetilde{\mathcal G}$ and for all $z\in\rho_{m\overline n}(\mathcal A)$, we have $\mathcal G(z)\!\uparrow_q=\uparrow_q\!\widetilde{\mathcal G}(z)$, and in particular $\widetilde{\mathcal G}(z)=\downarrow_q\!\mathcal G(z)\!\uparrow_q$.
\end{enumerate}
\end{theorem}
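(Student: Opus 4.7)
\medskip
\noindent\emph{Proof proposal.}
Set $T := m\overline{n}$, $\widetilde{T} := \widetilde{m}\overline{n}$, $\epsilon := e^{2\pi j/T}$, and abbreviate $G(z) := z\mathcal{C}(\mathcal{N}_T - z\mathcal{A})^{-1}\mathcal{B} + \mathcal{D}$, so that by \eqref{eq:Ttransf}, $\mathcal{G}(z) = \Pi_{m,1,\overline{n}}^*\, G(z)\, \Pi_{n,1,\overline{m}}/\overline{m}$.

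For item~(1), the plan is to first exhibit a pseudo-circulant symmetry of the full matrix $G(z)$. The definition \eqref{eq:Ndef} gives $\sigma_T\mathcal{N}_T = \epsilon\,\mathcal{N}_T\sigma_T$, while \eqref{eq:sigmaAscKomm} together with its counterparts for $\mathcal{B},\mathcal{C},\mathcal{D}$ shows that $\sigma_T$ commutes with each of $\mathcal{A},\mathcal{B},\mathcal{C},\mathcal{D}$. Replaying the manipulation behind Prop.~\ref{prop:resinv}, these combine into $G(z/\epsilon) = \sigma_T\, G(z)\,\sigma_T^{-1}$, which iterates to the blockwise identity $[G(z)]_{k,\ell} = [G(z/\epsilon^\ell)]_{k-\ell,\,0}$ with indices taken modulo~$T$. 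Writing out
$$
	[\mathcal{G}(z)]_{k,\ell} = \tfrac{1}{\overline{m}}\sum\nolimits_{i=0}^{\overline{n}-1}\sum\nolimits_{j=0}^{\overline{m}-1}[G(z)]_{im+k,\,jn+\ell}
$$
and substituting this symmetry, I verify that the result depends on $k,\ell$ only through $(k-\ell)\bmod m$ and through the argument $z/\epsilon^\ell$; an index shift exploiting the $T$-periodicity of $[G(z)]_{\cdot,0}$ then confirms that $G_r(z):=[\mathcal{G}(z)]_{r,0}$ extends $m$-periodically in~$r$, which is precisely the pseudo-circulant arrangement claimed.

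For item~(2), the strategy is to push $\uparrow_q$ through every factor of \eqref{eq:Ttransf}. The needed intertwinings are: (a)~$\mathcal{A}\uparrow_q = \uparrow_q\widetilde{\mathcal{A}}$ and the analogous identities for $\mathcal{B},\mathcal{C},\mathcal{D}$ (plus their $\downarrow_q$-mirrors), which follow from Remark~\ref{rem:SwissCheese}; (b)~$\mathcal{N}_T\uparrow_q = \uparrow_q\mathcal{N}_{\widetilde{T}}$ by \eqref{eq:NSwissInt}; and (c)~by an explicit entry-wise check, $\Pi_{n,1,\overline{m}}\uparrow_q = \uparrow_q\Pi_{\widetilde{n},1,\overline{m}}$ together with $\Pi_{m,1,\overline{n}}^*\uparrow_q = \uparrow_q\Pi_{\widetilde{m},1,\overline{n}}^*$, each $\uparrow_q$ acting between the appropriate spaces (whose dimensions differ by a factor~$q$).

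The main obstacle is lifting these first-order intertwinings to the resolvent factor, which in particular requires establishing $z\in\rho_{\widetilde{T}}(\widetilde{\mathcal{A}})$. The plan is to verify directly that the bounded operator $S := \downarrow_q(\mathcal{N}_T - z\mathcal{A})^{-1}\uparrow_q$ is a two-sided inverse of $\mathcal{N}_{\widetilde{T}} - z\widetilde{\mathcal{A}}$; both products collapse to the identity on $X^{\widetilde{T}}$ using (a), (b), and the elementary $\downarrow_q\uparrow_q = I$. This simultaneously yields the inclusion $\rho_T(\mathcal{A}) \subseteq \rho_{\widetilde{T}}(\widetilde{\mathcal{A}})$, and after left- and right-multiplying $(\mathcal{N}_T - z\mathcal{A})\uparrow_q = \uparrow_q(\mathcal{N}_{\widetilde{T}} - z\widetilde{\mathcal{A}})$ by the respective resolvents one obtains the key relation $(\mathcal{N}_T - z\mathcal{A})^{-1}\uparrow_q = \uparrow_q(\mathcal{N}_{\widetilde{T}} - z\widetilde{\mathcal{A}})^{-1}$. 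With every factor of \eqref{eq:Ttransf} now commuting appropriately with $\uparrow_q$, pushing it through produces $\mathcal{G}(z)\uparrow_q = \uparrow_q\widetilde{\mathcal{G}}(z)$; applying $\downarrow_q$ from the left and invoking $\downarrow_q\uparrow_q = I$ once more delivers $\widetilde{\mathcal{G}}(z) = \downarrow_q\mathcal{G}(z)\uparrow_q$.
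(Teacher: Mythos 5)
Your proposal is correct and follows essentially the same route as the paper. For item~1 the paper pushes the cyclic shift through the $\Pi$ factors via the intertwinements \eqref{eq:ShiftPi} to get $\mathcal G(z)\,\sigma_n=\sigma_m\,\mathcal G(\epsilon z)$ rather than unwinding $\Pi_{m,1,\overline n}^*$ and $\Pi_{n,1,\overline m}$ into an explicit double sum as you do, but the content is identical; for item~2 your chain of intertwinements (Remark~\ref{rem:SwissCheese} for the Toeplitz transforms, \eqref{eq:NSwissInt} for $\mathcal N_T$, the $\Pi$ identities, and the two-sided--inverse argument for the resolvent) reproduces exactly the paper's steps.
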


Given the first column of  ${\mathcal G}$, we hence get the next column by applying $\sigma_m^{-1}$ and dividing the argument by $\epsilon$. Thus ${\mathcal G}$ is uniquely determined by its first column. In general, we say that $\mathcal E$ is the $q$-\emph{inflate} of $\widetilde{\mathcal E}$ if $\mathcal E\!\uparrow_q=\uparrow_q\!\widetilde{\mathcal E}$.

\begin{proof}
Fix $z\in\rho_{m\overline n}(\mathcal A)$ arbitrarily and denote the element on row $k$ in column $\ell$ of ${\mathcal G}(z)$ by $G_{k,\ell}(z)$. Column 0 of $\mathcal G(z)$ is correct by definition, and we next prove that $G_{k+1,\ell+1}(z)=G_{k,\ell}(z/\epsilon)$, with indices modulo $m$ and $n$, respectively. Denote by ${\mathbf I}^{(\ell,n)}$ the column vector in $U^n$ whose all elements are $0_U$, except for the entry $I_U$ in position $\ell$, so that $\mathbf I^{(0,n)}=\Pi_{1,0,n}$ with $V=U$. By definition, $G_{k+1,\ell+1}(z)$ is on row $k+1$ in $\mathcal G(z)\,\mathbf I^{(\ell+1,n)}$. 

First we prove the second of the two intertwinements 
\begin{equation}\label{eq:ShiftPi}
\begin{aligned}
	\Pi_{n,1,\overline m}\,\sigma_n&=\sigma_{m\overline n}\,\Pi_{n,1,\overline m}
	\qquad\text{and}\\
	\sigma_m\,\Pi_{m,1,\overline n}^*&=\Pi_{m,1,\overline n}^*\,\sigma_{m\overline n}.
\end{aligned}
\end{equation}
Indeed, for all $k=0,\ldots,m-1$ and $m\overline n$-columns $\mathbf v$,
$$
	(\Pi_{m,1,\overline n}^*\,\sigma_{m\overline n}\, v)_k =
	\sum_{\ell=0}^{\overline n-1} v_{k+1+\ell m} =
	(\sigma_m\,\Pi_{m,1,\overline n}^* v)_k.
$$
Replacing both occurences of $\sigma$ by $\sigma^*$ and $+1$ by $-1$ in the previous calculation, we get the same equality with periodic left shifts replaced by periodic right shifts; then the first line of \eqref{eq:ShiftPi} follows from the unitarity of $\sigma_T$.

Next, \eqref{eq:resintertw} and item 2 of Prop.\ \ref{prop:resinv} imply that
\begin{equation}\label{eq:resshiftint}
	(\mathcal N_{m\overline n}-z\mathcal A)^{-1}\sigma_{m\overline n}
	= \epsilon\,\sigma_{m\overline n}
		(\mathcal N_{m\overline n}-\epsilon\, z\mathcal A)^{-1}.
\end{equation}
Setting $\mathcal G^\circ(z):=z\mathcal C(\mathcal N_{m\overline n} -z\mathcal A)^{-1}\mathcal B+\mathcal D$, where $\mathcal B:=\mathcal F_{m\overline n}(B)$, $\mathcal C:=\mathcal F_{m\overline n}(C)$ and $\mathcal D:=\mathcal F_{m\overline n}(D)$, we get from \eqref{eq:sigmaAscKomm} and \eqref{eq:resshiftint} that $\mathcal G^\circ(z)\,\sigma_{m\overline n}=\sigma_{m\overline n}\,\mathcal G^\circ(\epsilon\, z)$. Combining this with \eqref{eq:ShiftPi} and $\sigma_n\,{\mathbf I}^{(\ell+1,n)}={\mathbf I}^{(\ell,n)}$, we have
$$
\begin{aligned}
	G_{k,\ell}(z) &= \big({\mathcal G}(z)\,{\mathbf I}^{(\ell,n)}\big)_k \\
	&=\left(\Pi_{m,1,\overline n}^*\,\mathcal G^\circ (z)\,\Pi_{n,1,\overline m}\,\sigma_n{\mathbf I}^{(\ell+1,n)}\right)_k/\overline m\\
	&= \big(\sigma_m\,{\mathcal G}(\epsilon\,z)\,{\mathbf I}^{(\ell+1,n)}\big)_k
	= G_{k+1,\ell+1}(\epsilon\,z).
\end{aligned}
$$
This proves item one.

To prove claim 2, assume that $\Sigma$ is also $(\widetilde m,\widetilde n)$-multirate as in the statement. By the definition of a multirate system, the $m\overline n$-periodic central system is then in fact $\widetilde m\overline n$-periodic, and by Remark \ref{rem:SwissCheese}, it then holds that $\widehat A:=\mathcal F_{m\overline n}(A)=\uparrow_q\!\widehat a$ with $\widehat a:=\downarrow_q\!\widehat A$. Denoting $a:=\mathcal F_{\widetilde m\overline n}^{-1}\widehat a$ and $\widetilde{\mathcal A}:=\mathcal T_{\widetilde m\overline n}(a)$, we have from \eqref{eq:AtoepD} that (indices mod $m\overline n$)
$$
	(\mathcal A\!\uparrow_q\!\mathbf x)_k = 
	\sum_{\ell=0}^{m\overline n-1} \widehat A_{k-\ell}\,(\uparrow_q\!\mathbf x)_{\ell} =
	\sum_{\ell=0}^{\widetilde m\overline n-1} \widehat A_{k-\ell q}\mathbf x_{\ell} =
	(\uparrow_q\!\widetilde{\mathcal A}\mathbf x)_k,
$$
and a similar calculation gives $\downarrow_q\!\mathcal A=\widetilde{\mathcal A}\!\downarrow_q$. The corresponding intertwinements hold for $\mathcal B$, $\mathcal C$, $\mathcal D$ defined like $\mathcal A$, and combining $\mathcal A\!\uparrow_q=\uparrow_q\!\widetilde{\mathcal A}$ with \eqref{eq:NSwissInt}, we get
\begin{equation}\label{eq:resint1}
	(\mathcal N_{m\overline n}-z\mathcal A)\!\uparrow_q = 
	\uparrow_q\!(\mathcal N_{\widetilde m\overline n}-z\widetilde{\mathcal A}).
\end{equation}
This implies, for $z\in\rho_{m\overline n}(\mathcal A)$, that
$$
	I_{\widetilde m\overline n}=\downarrow_q\uparrow_q = 
	\downarrow_q\!(\mathcal N_{m\overline n}-z\mathcal A)^{-1}\uparrow_q\!(\mathcal N_{\widetilde m\overline n}-z\widetilde{\mathcal A}),
$$
which shows that $\downarrow_q\!\!(\mathcal N_{m\overline n}-z\mathcal A)^{-1}\!\uparrow_q$ is a bounded left inverse for $\mathcal N_{\widetilde m\overline n}-z\widetilde{\mathcal A}$. Starting instead with the down\-sampling version of \eqref{eq:resint1}, one gets that this left inverse is in fact a bounded inverse, proving that $\rho_{m\overline n}(\mathcal A)\subset \rho_{\widetilde m\overline n}(\widetilde{\mathcal A})$. For all $z\in\rho_{m\overline n}(\mathcal A)$, \eqref{eq:resint1} implies that $\uparrow_q$ intertwines the resolvents. From $\uparrow_q \!I_{\widetilde m}=I_m\!\uparrow_q$ and the definition of $\Pi$ in Lem.\ \ref{lem:EMP}, it follows that $\uparrow_q\!\Pi_{\widetilde m,1,\overline n}^*=\Pi_{m,1,\overline n}^*\!\uparrow_q$ and $\uparrow_q\!\Pi_{\widetilde n,1,\overline m}=\Pi_{n,1,\overline m}\!\uparrow_q$. Hence, $\mathcal G(z)\uparrow_q=\uparrow_q\widetilde{\mathcal G}(z)$, where
$$
	\widetilde{\mathcal G}(z):=\Pi_{\widetilde m,1,\overline n}^*\big(z\widetilde{\mathcal C}(\mathcal N_{\widetilde m\overline n}-z\widetilde{\mathcal A})^{-1}\widetilde{\mathcal B}+\widetilde{\mathcal D}\big)\Pi_{\widetilde n,1,\overline m}/\overline m,
$$
and applying $\downarrow_q$ from the left, we get the final equality.
\end{proof}

The particular initial time of a system is a rather arbitrary choice, and it is of interest to consider how the transfer function changes if one happens to choose a different initial time. 
It turns out that this only results in a certain type of complex rotation of the components of the transfer function, which we next describe. 

For an $(m,n)$-multirate system $\Sigma$, \eqref{eq:sysfourier} implies that
$$
	(\sigma^\tau A)_t=A_{t+\tau} = \sum_{k=0}^{m\overline n-1} (\widehat A_k\,e^{2\pi j \tau k/m\overline n})\,e^{2\pi j tk/m\overline n}.
$$
Denote $\mathcal A_{(\tau)}:=\mathcal T_{m\overline n}\,(\sigma^\tau A)$, to get from \eqref{eq:AtoepD} that
$$
	\mathcal A_{(\tau)}=\mathcal N_{m\overline n}^\tau\,\mathcal A\,\mathcal N_{m\overline n}^{-\tau},
$$
and analogously for $\mathcal B_{(\tau)}$, $\mathcal C_{(\tau)}$, and $\mathcal D_{(\tau)}$. Let $\mathcal G$ be the transfer function of $\Sigma$, and let $\mathcal G_{(\tau)}$ be the transfer function of $\Sigma$ with system operators shifted $\tau$ steps to the left. With $\mathcal G^\circ$ the transfer function of the central system, we thus get
$$
	\mathcal G^\circ_{(\tau)}(z)=\mathcal N_{m\overline n}^\tau\,\mathcal G^\circ(z)\,\mathcal N_{m\overline n}^{-\tau},\quad
	z\in\rho_{m\overline n}(\mathcal A)=\rho_{m\overline n}(\mathcal A_{(\tau)}).
$$
If we want both the up- and the downsampler to be active at the new initial time $0$, then $\tau=\overline m\,\overline n$ is the shortest possible shift in the central system. From
$$
	\mathcal N_{\overline mn}^{-\overline m\,\overline n}\,\Pi_{n,1,\overline m} = 
	\Pi_{n,1,\overline m}\,\mathcal N_n^{-\overline n},	
$$
we get 
$$
\begin{aligned}
	\mathcal G_{(\overline m\,\overline n)}(z) &= \Pi_{m,1,\overline n}^*\,\mathcal G^\circ_{(\overline m\,\overline n)}(z)\,\Pi_{n,1,\overline m}/\overline m \\\
	&= \Pi_{m,1,\overline n}^*\,\mathcal N_{m\overline n}^{\overline m\,\overline n}\,\mathcal G^\circ(z)\,\mathcal N_{\overline mn}^{-\overline m\,\overline n}\,\Pi_{n,1,\overline m}/\overline m \\
	&= \mathcal N_m^{\overline m}\,\mathcal G(z)\,
	\mathcal N_n^{-\overline n}.
\end{aligned}
$$

We now proceed to optimal approximation.

\section{Optimal approximation with shorter period}\label{sec:approx}

The $H^2$ norm is commonly used for finite-dimensional LTI systems, and this norm was extended to periodic systems in \cite{BaPe92}. The extended norm, which is of Hilbert-Schmidt type, essentially consists in taking the element-wise $H^2$ norm of the first column of the harmonic transfer function, which is matrix valued in \cite{BaPe92}. Thus the formulation requires some modification in order to extend to the separable Hilbert-space case; see also \cite[\S\S3.4--5]{BalaBook76} for more background on the Hilbert-Schmidt norm.

Let $\mathcal U$ be some separable Hilbert space and let the sequence $e_\ell$, $\ell\in\Lambda_1$, be an orthonormal basis for $\mathcal U$. With $\mathbf I$ introduced in the proof of Thm \ref{thm:toep}, the sequence $e_{\ell,k}:=\mathbf I^{(k,n)}e_\ell$ is then an orthonormal basis for $\mathcal U^n$, with index set $\Lambda_2:=\Lambda_1\times\{0,\ldots,n-1\}$, such that the Hilbert-Schmidt norm of $\mathcal G:\mathcal U^n\to H^2(\mathbb D;Y)^m$ satisfies (item 1 of Thm \ref{thm:toep})
\begin{equation}\label{eq:HS}
\begin{aligned}
	|{\mathcal G}|^2:=& \,\frac 1n\|\mathcal G\|_{HS}^2=
	\frac 1n\sum_{(\ell,k)\in\Lambda_2} \|{\mathcal G}e_{\ell,k}\|^2_{H^2(\mathbb D;Y)^m} \\
	=& 
	\frac 1n\sum_{(l,k)\in\Lambda_2,\,i=0,\ldots,m-1} \| G_i(\cdot/\epsilon^k)\,e_{\ell}\|^2_{H^2(\mathbb D;Y)}  \\ 
	=&	\,\sum_{i=0}^{m-1} \|G_i\|^2_{HS},
\end{aligned}		
\end{equation}
where $\|G_i\|_{HS}$ is the Hilbert-Schmidt norm of $G_i:\mathcal U\to H^2(\mathbb D;Y)$, assuming that all of these $G_i$ have finite Hilbert-Schmidt norms. The factor $1/n$ scales the Hilbert-Schmidt norm of $\mathcal G$, so that it becomes invariant under inflation, as can be seen in the last line of \eqref{eq:HS}. 

The rest of the paper is dedicated to solving the following approximation problem in frequency and time domain: For a given $(m,n)$-multirate system with transfer function $\mathcal G$ and a pair $(\widehat m,\widehat n)$ with $\widehat m/\widehat n=m/n$, $0<\widehat m<m$, find an $(\widehat m,\widehat n)$-multirate system with transfer function $\widetilde{\mathcal G}$, such that and the Hilbert-Schmidt norm $|\mathcal G-\widetilde{\mathcal G}|$ is minimized. In Thm \ref{thm:NewOpt}, we first find the optimal $\widetilde{\mathcal G}$, and in Thm \ref{thm:Approx}, we show that $\widetilde {\mathcal G}$ is indeed the harmonic transfer function of an $(\widehat m,\widehat n)$-multirate system.

\begin{theorem}\label{thm:NewOpt}
Let $\Sigma$ be $(m,n)$-multirate and set $c:=\mathrm{gcd}\,\,(m,n)$, $m=c\overline m$ and $n=c\overline n$. Any $|\cdot|$-optimal $(\widehat c\,\overline m,\widehat c\,\overline n)$-multirate approximant $\widetilde\Sigma$ of $\Sigma$, $1\leq \widehat c<c$ has transfer function $\downarrow_{c/\widetilde c}\mathcal G\!\uparrow_{c/\widetilde c}$ (modulo inflation), where $\widetilde c:=\mathrm{gcd}\,\,(c,\widehat c)$.
\end{theorem}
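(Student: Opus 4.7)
The plan is to lift $\mathcal{G}$ and the transfer function $\widetilde{\mathcal{G}}$ of an arbitrary $(\widehat{c}\overline{m},\widehat{c}\overline{n})$-multirate candidate $\widetilde{\Sigma}$ to a common multirate framework via inflation, apply the Hilbert--Schmidt decomposition \eqref{eq:HS} to reduce the criterion to a sum over first-column entries, and then exploit the sparse structure of inflates (Thm \ref{thm:toep}(2)) to carry out a pointwise orthogonal projection.

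Put $M:=\mathrm{lcm}(c,\widehat{c})\overline{m}=c\widehat{c}\overline{m}/\widetilde{c}$ and $N:=\mathrm{lcm}(c,\widehat{c})\overline{n}$; both $\Sigma$ and every candidate $\widetilde{\Sigma}$ are also $(M,N)$-multirate. Let $\mathcal{G}^\sharp$ and $\widetilde{\mathcal{G}}^\sharp$ be the corresponding $(M,N)$-level harmonic transfer functions. By Thm \ref{thm:toep}(2), $\mathcal{G}^\sharp$ is the $(\widehat{c}/\widetilde{c})$-inflate of $\mathcal{G}$ and $\widetilde{\mathcal{G}}^\sharp$ is the $(c/\widetilde{c})$-inflate of $\widetilde{\mathcal{G}}$. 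Using the inflation-invariance of $|\cdot|$ (cf.\ the comment after \eqref{eq:HS}) and applying \eqref{eq:HS} at the lifted level,
\[
    |\mathcal{G}-\widetilde{\mathcal{G}}|^2=|\mathcal{G}^\sharp-\widetilde{\mathcal{G}}^\sharp|^2=\sum_{i=0}^{M-1}\bigl\|G_i^\sharp-\widetilde{G}_i^\sharp\bigr\|_{HS}^2,
\]
where $G_i^\sharp,\widetilde{G}_i^\sharp$ are the first-column entries. The inflate structure forces $G_i^\sharp=0$ unless $(\widehat{c}/\widetilde{c})\mid i$, and $\widetilde{G}_i^\sharp=0$ unless $(c/\widetilde{c})\mid i$; at the permitted indices they agree with the corresponding first-column entries of $\mathcal{G}$ and $\widetilde{\mathcal{G}}$.

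Since $\gcd(c/\widetilde{c},\widehat{c}/\widetilde{c})=1$, only the summands with $(c/\widetilde{c})\mid i$ carry an optimizable parameter, and the pointwise optimum there is $\widetilde{G}_i^\sharp=G_i^\sharp$. This forces $\widetilde{G}_i^\sharp=0$ whenever $i$ is divisible by $c/\widetilde{c}$ but not by $\widehat{c}/\widetilde{c}$, and yields $\widetilde{G}_{(\widehat{c}/\widetilde{c})k}=G_{(c/\widetilde{c})k}$ at the doubly-divisible indices $i=(c\widehat{c}/\widetilde{c}^2)k$. Thus the first column of $\widetilde{\mathcal{G}}$ is the $(\widehat{c}/\widetilde{c})$-inflate of the $(c/\widetilde{c})$-downsample of the first column of $\mathcal{G}$, and by the shift-symmetry of Thm \ref{thm:toep}(1) this identifies $\widetilde{\mathcal{G}}$ with the $(\widehat{c}/\widetilde{c})$-inflate of $\downarrow_{c/\widetilde{c}}\mathcal{G}\uparrow_{c/\widetilde{c}}$ -- i.e.\ with $\downarrow_{c/\widetilde{c}}\mathcal{G}\uparrow_{c/\widetilde{c}}$ modulo inflation. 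The main obstacle is careful index bookkeeping through the nested sampling operations, and in particular verifying that the difference $\mathcal{G}^\sharp-\widetilde{\mathcal{G}}^\sharp$ inherits the shift-symmetry required by \eqref{eq:HS} (which follows by linearity from both $\mathcal{G}^\sharp$ and $\widetilde{\mathcal{G}}^\sharp$ being harmonic transfer functions at the common level).
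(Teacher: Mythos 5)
Your proof is correct and follows essentially the same route as the paper's: lift to the common $\mathrm{lcm}$ level via Thm~\ref{thm:toep}(2), reduce the $|\cdot|$-norm to a sum over first-column entries via \eqref{eq:HS} and Thm~\ref{thm:toep}(1), and minimize termwise using the complementary sparsity patterns of the two inflated first columns (coprimality of $c/\widetilde c$ and $\widehat c/\widetilde c$). The paper phrases the same argument in terms of the error system $\Sigma'=\Sigma-\widehat\Sigma$ and its first column $\mathcal G'\,\mathbf I^{(0,C\overline n)}$ rather than comparing the two lifts $\mathcal G^\sharp$ and $\widetilde{\mathcal G}^\sharp$ separately, but the index bookkeeping and the conclusion are the same.
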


Please observe that the minimizing transfer function of minimal periodicity is unique up to inflation, but this implies no uniqueness of state-space representation, unless further minimality assumptions are made. Later, we will produce two $(\widetilde c\,\overline m,\widetilde c\,\overline n)$-multirate realizations of $\downarrow_{c/\widetilde c}\mathcal G\!\uparrow_{c/\widetilde c}$, thus proving that an optimal $(\widehat c\,\overline m,\widehat c\,\overline n)$-multirate approximant can in fact be found among the $(\widetilde c\,\overline m,\widetilde c\,\overline n)$-multirate systems; note that often $\widetilde c<\widehat c$.

\begin{proof}
Let $\widehat\Sigma$ be an arbitrary $(\widehat c\,\overline m,\widehat c\,\overline n)$-multirate system. Following the proof of \cite[Theorem 1]{MeCh00}, we start off by observing that the error system $\Sigma':=\Sigma-\widehat\Sigma$ is $(C\overline m,C\overline n)$-multirate, where $C:=\mathrm{lcm}\,\,(c,\widehat c)=c\,\widehat c/\widetilde c$. By item 2 of Thm \ref{thm:toep}, the first column of the error transfer function $\mathcal G'$ is
\begin{equation}\label{eq:optimizethis}
	\mathcal G'\,\mathbf I^{(0,C\overline n)} =
	\uparrow_{\widehat c/\widetilde c} \mathcal G\,\mathbf I^{(0,c\overline n)}\,
		-\uparrow_{c/\widetilde c} \widehat{\mathcal G}\,\mathbf I^{(0,\widehat c\,\overline n)},
\end{equation}
and by \eqref{eq:HS}, we minimize $|\mathcal G'|$ by choosing $\widehat{\mathcal G}$ so as to make as many of the $G'_i$ as possible zero, $0\leq i\leq C\overline m-1$. 

For $i=\ell c/\widetilde c$, $\ell\in\mathbb Z$, which is not an integer multiple of $\widehat c/\widetilde c$, we have $G'_i=-\widehat G_\ell$ which should be chosen as zero. Then $\widehat G_\ell\neq0$ only if $\ell c/\widetilde c=k\,\widehat c/\widetilde c$ for some $k\in\mathbb Z$, i.e., for 
$$
	\ell=\frac{\widehat c/\widetilde c}{c/\widetilde c}\,k\in
	\frac {\widehat c}{\widetilde c}\,\mathbb Z,
$$
since $\widehat c/\widetilde c$ and $c/\widetilde c$ are co-prime by the definition of $\widetilde c$, and necessarily $\ell\in\mathbb Z$. Then
$$
	\widehat{\mathcal G}\,\mathbf I^{(0,\widehat c\,\overline n)}=
	\uparrow_{\widehat c/\widetilde c}\,
	\widetilde{\mathcal G}\,\mathbf I^{(0,\widetilde c\,\overline n)}
$$
with $\widetilde {\mathcal G}:=\,\downarrow_{\widehat c/\widetilde c} \widehat{\mathcal G}\uparrow_{\widehat c/\widetilde c}$, and because upsamplers commute and $\mathbf I^{(0,c\overline n)}=\uparrow_{c/\widetilde c}\mathbf I^{(0,\widetilde c\,\overline n)}$, \eqref{eq:optimizethis} can be written
$$
	\mathcal G'\,\mathbf I^{(0,C\overline n)} =
	\uparrow_{\widehat c/\widetilde c} \big(
	\mathcal G\uparrow_{c/\widetilde c}
		-\uparrow_{c/\widetilde c}
		\widetilde{\mathcal G}\big)\,\mathbf I^{(0,\widetilde c\,\overline n)}.
$$
This column has the maximal number of zero elements if we choose $\widetilde {\mathcal G}:=\,\downarrow_{c/\widetilde c}\mathcal G\uparrow_{c/\widetilde c}$. This is the optimal $\widetilde{\mathcal G}$, and the optimal $\widehat{\mathcal G}$ is the $\widehat c/\widetilde c$-inflate of $\widetilde{\mathcal G}$.
\end{proof}

\begin{remark}\label{rem:intsample} 
By \eqref{eq:Ttransf} and Thm \ref{thm:NewOpt}, and the way that $\uparrow_q$ and $\downarrow_q$ interact with $\Pi$, see the end of the proof of Theorem \ref{thm:toep}, $|\cdot|$-optimal approximations are obtained simply by downsampling the $m\overline n$-periodic central system of $\Sigma$:
$$
\begin{aligned}
	\widetilde {\mathcal G} &= \,
	\downarrow_q\!\mathcal G\!\uparrow_q \\
	&= \Pi_{\widetilde m,1,\overline n}^*\downarrow_q\!\big(z\mathcal C(\mathcal N_{m\overline n} -z\mathcal A)^{-1}\mathcal B+\mathcal D\big)\!\uparrow_q\!\Pi_{\widetilde n,1,\overline m}/\overline m.
\end{aligned}
$$
\end{remark}

Next we give an example which shows that, in general, an optimal approximation of $\Sigma$ with shorter period must have a larger state space than $\Sigma$, even for $\Sigma$ \emph{periodic} with $A_t$ \emph{constant}. However, unlike in the case of \emph{time} lifting, we will below obtain a state-space representation of the optimal approximant, which has the same input and output spaces as the original system $\Sigma$.

\begin{example}\label{ex:firstex}
Consider the following $2$-periodic system $\Sigma$: Let $U:=X:=Y:=\mathbb C$, $A_t:=1/2$, $D_t:=0$, for $t=0,1$,
$$
	B_t:=\left\{\begin{aligned} 2,&\quad t=0, \\ 6,&\quad t=1,\end{aligned}\right.\qquad\text{and}\qquad
	C_t:=\left\{\begin{aligned} -1,&\quad t=0, \\ 3,&\quad t=1.\end{aligned}\right.
$$
Taking Toeplitz transforms and using \eqref{eq:Ttransf}, we get
$$
	\mathcal G(z) = \frac{4z}{\left( 2-z\right)   \left( 2+z\right) }
	\left[\begin{matrix} 4z & 6-5z \\
	  -6-5z&4z \end{matrix}\right],
$$
and by Theorem \ref{thm:NewOpt}, (the transfer function of) the $|\cdot|$-optimal LTI approximation of $\Sigma$ 
thus has two distinct poles. No LTI system with state space $X=\mathbb C$ has this property. 
\end{example}

However, it \emph{is} possible to get state-space formulas for  the optimal LTI approximant of the system in Example \ref{ex:firstex}, where the main operator $A$ of the approximant acts on the same space as the Toeplitz matrix $\mathcal A$ of the original system, $\mathbb C^2$. This follows from the final result of the paper, which provides a time-domain analogue of Theorem \ref{thm:NewOpt}. 

In order to formulate the final result, we need to introduce some notation. Let $\Sigma=(A_t,B_t,C_t,D_t)$ be an $(m,n)$ multirate system, set $\mathcal A:=\mathcal T_{m\overline n}\,(A)$, $\mathcal B:=\mathcal T_{m\overline n}\,(B)$, $\mathcal C:=\mathcal T_{m\overline n}\,(C)$, and let $q$ be a positive integer which divides both $m$ and $n$. Next denote $\epsilon:=e^{2\pi j/m\overline n}$ and set 
$$
	M:=\mathrm{diag}\,(\epsilon^kI_X)_{k=0}^{q-1}\qquad\text{and}\qquad
	\mathfrak M:=\mathrm{diag}\,\big(M)_{k=0}^{m\overline n/q-1},
$$
so that, with $\mathfrak N_{m\overline n/q}$ defined in analogy to \eqref{eq:Ndef} but with $I_X$ replaced by $I_{X^q}$, and disregarding the block subdivision,
\begin{equation}\label{eq:MscrNscr}
	\mathcal N_{m\overline n}=\mathfrak N_{m\overline n/q}\,\mathfrak M=\mathfrak M\,\mathfrak N_{m\overline n/q}.
\end{equation}

Further combine the operator entries of $\mathcal A$ into $q\times q$ blocks and denote the result by $\mathfrak A$, so that $\mathfrak A$ consists of $m\overline n/q\times m\overline n/q$ operators on $X^q$. Perform the same operation on $\mathcal B$ and $\mathcal C$ to obtain $\mathfrak B:(U^q)^{m\overline n/q}\to (X^q)^{m\overline n/q}$ and $\mathfrak C:(X^q)^{m\overline n/q}\to (Y^q)^{m\overline n/q}$. Denote by $\Uparrow_q$ the operator $\uparrow_q:U^{m\overline n/q}\to U^{m\overline n}$ reinterpreted as an operator from $U^{m\overline n/q}$ to $(U^q)^{m\overline n/q}$, and let $\Downarrow_q:(Y^q)^{m\overline n/q}\to Y^{m\overline n/q}$ be the analogous reinterpretation of $\downarrow_q:Y^{m\overline n}\to Y^{m\overline n/q}$. Finally, $\mathbf I^{m\overline n|q}_V$ denotes the $m\overline n/q$ column with $I_{V^q}$ in position zero and the zero operator on $V^q$ in the other positions.

\begin{theorem}\label{thm:Approx} 
Let $\Sigma$ be $(m,n)$-multirate with $m/q,n/q\in\mathbb Z$ for $q\in\mathbb Z^+$. The $(m/q,n/q)$-multirate system $\widetilde\Sigma$ with
\begin{equation}\label{eq:App1D2}
\begin{aligned}
	\widetilde B&:=\mathcal F_{m\overline n/q,1}^{-1}\,\mathfrak B\Uparrow_q\!\mathbf I^{m\overline n/q|1}_U, \\
	\widetilde A&:=\mathcal F_{m\overline n/q,1}^{-1}\,\mathfrak A\,\mathfrak M^{-1}\,\mathbf I^{m\overline n|q}_X, \\
	\widetilde C&:=\mathcal F_{m\overline n/q,1}^{-1}\Downarrow_q\mathfrak C\,\mathfrak M^{-1}\,\mathbf I^{m\overline n|q}_X,
	\end{aligned}
\end{equation}
and, for all $0\leq t\leq m\overline n/q-1$,
\begin{equation}\label{eq:Dapprox}
	\widetilde D_t:=(\Pi_{m\overline n/q,1,q}^*\,D)_t/q=
		\frac 1q\sum_{k=0}^{q-1} D_{t+km\overline n/q},
\end{equation}
is an $|\cdot|$-optimal $(m/q,n/q)$-multirate approximation of $\Sigma$.

The statement remains true if \eqref{eq:App1D2} is replaced by
\begin{equation}\label{eq:App1D}
\begin{aligned}
	\widetilde B&:=\mathcal F_{m\overline n/q,1}^{-1}\,\mathfrak M^{-1}\,\mathfrak B\Uparrow_q\!\mathbf I_U^{m\overline n/q|1}, \\
	\widetilde A&:=\mathcal F_{m\overline n/q,1}^{-1}\,\mathfrak M^{-1}\,\mathfrak A\,\mathbf I_X^{m\overline n|q}, \qquad\text{and}\\
	\widetilde C&:=\mathcal F_{m\overline n/q,1}^{-1}\Downarrow_q\mathfrak C\,\mathbf I_X^{m\overline n|q}.
	\end{aligned}
\end{equation}
\end{theorem}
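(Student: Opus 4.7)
The plan is to show that the system $\widetilde\Sigma$ defined by \eqref{eq:App1D2} has harmonic transfer function $\downarrow_q\!\mathcal G\!\uparrow_q$, which by Thm \ref{thm:NewOpt} is the $|\cdot|$-optimal $(m/q,n/q)$-period transfer function. Note that $q\mid m$ and $q\mid n$ force $q\mid c$, so with $\widehat c:=c/q$ we get $\widetilde c:=\mathrm{gcd}\,(c,\widehat c)=c/q$, $c/\widetilde c=q$ and $\widehat c/\widetilde c=1$; hence no inflation is required, and the target transfer function is exactly $\downarrow_q\!\mathcal G\!\uparrow_q$.

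The first step is to identify the Toeplitz transforms $\widetilde{\mathcal A}:=\mathcal T_{m\overline n/q}(\widetilde A)$, $\widetilde{\mathcal B}:=\mathcal T_{m\overline n/q}(\widetilde B)$ and $\widetilde{\mathcal C}:=\mathcal T_{m\overline n/q}(\widetilde C)$. Since $\mathcal A$ has the circulant structure \eqref{eq:AtoepD}, its $q$-block reinterpretation $\mathfrak A$ is block-circulant of size $m\overline n/q$, and since $\mathfrak M^{-1}$ is block-diagonal with every diagonal block equal to $M^{-1}$, the product $\mathfrak A\mathfrak M^{-1}$ remains block-circulant. The same holds for $\mathfrak B\Uparrow_q$ and $\Downarrow_q\mathfrak C\mathfrak M^{-1}$, since $\Uparrow_q$ and $\Downarrow_q$ act identically on each block. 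Applying $\mathbf I^{m\overline n|q}_X$ (or the $U$-variant $\mathbf I^{m\overline n/q|1}_U$ for $\widetilde B$) from the right picks out the first block column, and the subsequent inverse DFT recovers the generating sequence; since a block-circulant operator with the structure of $\mathcal T_{m\overline n/q}$ is determined by its first block column, we conclude
$$\widetilde{\mathcal A}=\mathfrak A\mathfrak M^{-1},\quad \widetilde{\mathcal B}=\mathfrak B\Uparrow_q,\quad \widetilde{\mathcal C}=\Downarrow_q\mathfrak C\mathfrak M^{-1}.$$
For the feedthrough, the orthogonality $\sum_{k=0}^{q-1}e^{2\pi jkr/q}=q$ if $q\mid r$ (and $0$ else) applied to \eqref{eq:Dapprox} yields $\widehat{\widetilde D}_s=\widehat D_{qs\,\mathrm{mod}\,m\overline n}$, which matches the first column of the block-circulant operator $(\downarrow_q\!\mathcal D\!\uparrow_q)_{k,\ell}=\widehat D_{q(k-\ell)\,\mathrm{mod}\,m\overline n}$, so $\widetilde{\mathcal D}=\downarrow_q\!\mathcal D\!\uparrow_q$.

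With these identifications in place, \eqref{eq:MscrNscr} gives $(\mathcal N_{m\overline n}-z\mathcal A)^{-1}=\mathfrak M^{-1}(\mathfrak N_{m\overline n/q}-z\widetilde{\mathcal A})^{-1}$, where $\mathfrak N_{m\overline n/q}$ is the $\mathcal N$-operator of $\widetilde\Sigma$ after block reinterpretation. Combining this factorization with the $\Pi$-intertwinements $\uparrow_q\!\Pi_{\widetilde n,1,\overline m}=\Pi_{n,1,\overline m}\!\uparrow_q$ and $\Pi^*_{\widetilde m,1,\overline n}\!\downarrow_q=\downarrow_q\!\Pi^*_{m,1,\overline n}$ (the first established in the proof of Thm \ref{thm:toep}, the second by taking the adjoint of the same argument applied with $n\leftrightarrow m$, $\overline m\leftrightarrow\overline n$), a direct calculation yields
$$\downarrow_q\!\mathcal G\!\uparrow_q=\Pi^*_{\widetilde m,1,\overline n}\bigl(z\widetilde{\mathcal C}(\mathfrak N_{m\overline n/q}-z\widetilde{\mathcal A})^{-1}\widetilde{\mathcal B}+\widetilde{\mathcal D}\bigr)\Pi_{\widetilde n,1,\overline m}/\overline m,$$
which is precisely the harmonic transfer function of $\widetilde\Sigma$ by \eqref{eq:Ttransf}; optimality follows from Thm \ref{thm:NewOpt}. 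The alternative formulas \eqref{eq:App1D} are handled by the mirror factorization $(\mathcal N_{m\overline n}-z\mathcal A)^{-1}=(\mathfrak N_{m\overline n/q}-z\mathfrak M^{-1}\mathfrak A)^{-1}\mathfrak M^{-1}$, which moves the $\mathfrak M^{-1}$ factor from the $\mathcal C$-side to the $\mathcal B$-side. The main obstacle is the bookkeeping: tracking block-circulance through each multiplication and verifying that the inverse-DFT formulas reconstruct the correct block-circulant matrices; once the Toeplitz-transform identifications are secured, the rest is mechanical using the commutations already at hand.
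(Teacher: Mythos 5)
Your proposal is correct and follows essentially the same route as the paper's own proof: identify the blocked Toeplitz transforms $\widetilde{\mathcal A}=\mathfrak A\mathfrak M^{-1}$, $\widetilde{\mathcal B}=\mathfrak B\Uparrow_q$, $\widetilde{\mathcal C}=\Downarrow_q\mathfrak C\mathfrak M^{-1}$, $\widetilde{\mathcal D}=\downarrow_q\mathcal D\uparrow_q$ by matching first (block) columns of operators with circulant structure, then use the resolvent factorization coming from \eqref{eq:MscrNscr} to obtain $\widetilde{\mathcal G}^\circ=\downarrow_q\mathcal G^\circ\uparrow_q$, and invoke Theorem \ref{thm:NewOpt} for optimality. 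There are only two cosmetic differences from the paper: you work with the full $\Pi$-sandwiched transfer function and supply the $\Pi$-intertwinements explicitly, whereas the paper reduces to the central system via Remark \ref{rem:intsample}; and for the alternative formulas \eqref{eq:App1D} you use the mirror resolvent factorization $(\mathcal N_{m\overline n}-z\mathcal A)^{-1}=(\mathfrak N_{m\overline n/q}-z\mathfrak M^{-1}\mathfrak A)^{-1}\mathfrak M^{-1}$ directly, while the paper phrases the same computation as a similarity transform $\widetilde{\mathcal A}'=\mathfrak M^{-1}\widetilde{\mathcal A}\mathfrak M$ together with $\mathfrak M^{-1}\mathfrak N_{m\overline n/q}\mathfrak M=\mathfrak N_{m\overline n/q}$ — these are equivalent.

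One phrase is a little loose and worth tightening: you say ``$\Uparrow_q$ and $\Downarrow_q$ act identically on each block,'' which is not literally what happens — $\mathfrak B\Uparrow_q$ extracts the first column of each $q\times q$ block $[\mathfrak B]_{r,\ell}$ and $\Downarrow_q\mathfrak C$ extracts the first row, after which the resulting entries still depend only on $r-\ell$ because $\mathfrak B$ and $\mathfrak C$ are block-circulant and $\mathfrak M^{-1}$ is block-scalar. This is the precise reason the circulant structure is preserved, and it is the same observation the paper makes more tersely (``the right-hand side inherits the Toeplitz structure from $\mathcal C$''). With that clarification, your argument is complete.
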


Before we proceed to the proof, we give an example to illustrate the theorem.

\begin{example}
In this example, we will calculate a state-space realization of the $(1,2)$-multirate optimal approximant of the following $(2,4)$-multirate system $\Sigma$ with state space $X:=\mathbb C^2$ and $U:=Y:=\mathbb C$, for $t=0,1,2,3$:
$$
	A_t:=j^tI_2, \quad
	B_t:=\begin{bmatrix}1\\0\end{bmatrix},\quad C_t:=B_t^*
	\quad\text{and}\quad D_t:=\delta_{0,t},
$$
where $I_n$ is the $n\times n$ identity matrix and $\delta_{0,t}$ is the Kronecker delta. The associated Toeplitz transforms are
$$
	\mathcal A=\sigma_8^{-2},\quad
	\mathcal B=I_8\uparrow_2,\quad
	\mathcal C=\mathcal B^*
	\quad\text{and}\quad
	\mathcal D=\frac14\left[\begin{smallmatrix}
		1&1&1&1\\1&1&1&1\\1&1&1&1\\
		1&1&1&1\end{smallmatrix}\right].
$$
Using that $\mathcal N_4=\mathrm{diag}\,(1,1,j,j,-1,-1,-j,-j)$, $m=q=\overline n=n/q=2$, $\overline m=m/q=1$, and that $\Pi_{2,1,2}^*=\begin{bmatrix}I_2&I_2\end{bmatrix}$ and $\Pi_{4,1,1}=I_4$ for $U=Y=\mathbb C$, we get from \eqref{eq:Ttransf} that the harmonic transfer function of $\Sigma$ satisfies
\begin{equation}\label{eq:ExTransfDownsamp}
	\downarrow_2\mathcal G(z)\!\uparrow_2=
	\begin{bmatrix} \frac{z^4+2jz^3+2z+1}{2\, ( z^4+1) }&
	\frac{z^4-2jz^3-2z+1}{2\, ( z^4+1) }
	\end{bmatrix}.
\end{equation}

Next we compute the system \eqref{eq:App1D2}--\eqref{eq:Dapprox} and its harmonic transfer function. In this case, $M=\mathrm{diag}\,(I_2,jI_2)$, $\mathfrak M=\mathrm{diag}\,(M,M)$ and $\mathfrak N_2=\mathrm{diag}\,(I_4,-I_4)$; indeed \eqref{eq:MscrNscr} holds. Moreover, $\mathbf I^{m\overline n/q|1}_U=\left[\begin{smallmatrix}1\\0\end{smallmatrix}\right]$ and $\mathbf I^{m\overline n|q}_X=\left[\begin{smallmatrix}I_4\\0\end{smallmatrix}\right]$. We get
$$
	\widetilde A_t=\begin{bmatrix} 
		0&-j^tI_2\\I_2&0	\end{bmatrix},\quad
	\widetilde B_t=\left[\begin{smallmatrix}
		1\\0\\0\\0\end{smallmatrix}\right]=
	\widetilde C_t^*
	\quad\text{and}\quad \widetilde D_t=\frac12 \delta_{0,t},
$$
$t=0,1$, for the system $\widetilde\Sigma$. As expected, we get $\widetilde{\mathcal B}=\mathcal B\!\uparrow_2$, $\widetilde{\mathcal C}=\downarrow_2\!\mathcal C$ and $\widetilde{\mathcal D}=\downarrow_2\!\mathcal D\!\uparrow_2$, and also
$$
	\widetilde{\mathcal A} = \sigma_8^{-2}\,\mathfrak M^{-1}
	=\left[\begin{smallmatrix} 0&0&0&-jI_2\\I_2&0&0&0\\
		0&-jI_2&0&0\\0&0&I_2&0\end{smallmatrix}\right].
$$
As $\Pi_{1,1,2}^*=\begin{bmatrix}1&1\end{bmatrix}$ and $\Pi_{2,1,1}=I_2$ for $\mathbb C$, we use \eqref{eq:Ttransf} to see that the harmonic transfer function of $\widetilde\Sigma$ equals \eqref{eq:ExTransfDownsamp}.
\end{example}

\begin{proof}[Proof of Theorem \ref{thm:Approx}.]
By Rem.\ \ref{rem:intsample}, it suffices to show that the harmonic transfer function $\widetilde{\mathcal G}^\circ$ of the central system in $\widetilde\Sigma$, corresponding to the periodicity $m\overline n/q$, coincides with $\downarrow_q\!\mathcal G^\circ\!\uparrow_q$, where $\mathcal G^\circ$ is the harmonic transfer function of the central system of $\Sigma$, with period $m\overline n$. It is clear that \eqref{eq:App1D2}--\eqref{eq:Dapprox} is $m\overline n/q$-periodic and we next show that this system also realizes $\downarrow_q\!\mathcal G^\circ\!\uparrow_q$. By \eqref{eq:Dapprox} and item 4 of Lemma \ref{lem:EMP}, we have $\mathcal F_{m\overline n/q,1}\widetilde D =\,\downarrow_q\!\mathcal F_{m\overline n,1} D$, and then \eqref{eq:AtoepD} gives $\widetilde{\mathcal D}:=\mathcal T_{m\overline n/q}(\widetilde D)= \,\downarrow_q\!\mathcal D\!\uparrow_q$, where $\mathcal D:=\mathcal T_{m\overline n}(D)$.

We next establish that $\widetilde{\mathcal C}:=\mathcal T_{m\overline n/q}\,(\widetilde C)=\,\Downarrow_q\! \mathfrak C\,\mathfrak M^{-1}$. Since the right-hand side inherits the Toeplitz structure \eqref{eq:AtoepD} from $\mathcal C$, it suffices to show that the first columns of these operator matrices coincide. Indeed, by \eqref{eq:App1D2}:
$$
	\mathcal T_{m\overline n/q}\,(\widetilde C)\,
	\mathbf I_X^{m\overline n|q} = 
	\mathcal F_{m\overline n/q,1}\,\widetilde C =\,
	\Downarrow_q\!\mathfrak C\,\mathfrak M^{-1}\mathbf I_X^{m\overline n|q}.
$$
The same argument gives $\widetilde{\mathcal A}:=\mathcal T_{m\overline n/q}\,(\widetilde A)=\mathfrak A\,\mathfrak M^{-1}$ and $\widetilde{\mathcal B}:=\mathcal T_{m\overline n/q}\,(\widetilde B)=\mathfrak B\!\Uparrow_q$. Disregarding the block subdivision, we get $\Downarrow_q\mathfrak C=\downarrow_q\mathcal C$, $\mathfrak B\Uparrow_q=\mathcal B\uparrow_q$, and from \eqref{eq:MscrNscr},
$$
	\mathfrak M^{-1}(\mathfrak N_{m\overline n/q}-z\,\mathfrak A\,\mathfrak M^{-1})^{-1} = 
	(\mathcal N_{m\overline n}-z\mathcal A)^{-1}.
$$
By combining these with $\widetilde{\mathcal D}=\downarrow_q\!\mathcal D\!\uparrow_q$, one gets 
\begin{equation}\label{eq:GcircCompress}
	\widetilde{\mathcal G}^\circ=
	z\widetilde{\mathcal C}(\mathfrak N_{m\overline n/q}-z
	\widetilde{\mathcal A})^{-1}\widetilde{\mathcal B}+
	\widetilde{\mathcal D}=
	\downarrow_q\!\mathcal G^\circ\!\uparrow_q.
\end{equation}
The result has now been proved for \eqref{eq:App1D2}, and next we obtain \eqref{eq:App1D} by a similarity argument.

Arguing as in the previous paragraph and denoting the blocked Toeplitz transforms of the operators in the left-hand sides of \eqref{eq:App1D} by $\widetilde{\mathcal B}'$, $\widetilde{\mathcal A}'$ and $\widetilde{\mathcal C}'$, we get
\begin{equation}\label{eq:freqsim}
	\widetilde{\mathcal B}'=\mathfrak M^{-1}\,\widetilde{\mathcal B},\quad
	\widetilde{\mathcal A}'=\mathfrak M^{-1}\,\widetilde{\mathcal A}\,\mathfrak M
	\quad\text{and}\quad
	\widetilde{\mathcal C}'=\widetilde{\mathcal C}\,\mathfrak M.
\end{equation}
By \eqref{eq:MscrNscr}, moreover, $\mathfrak M^{-1}\,\mathfrak N_{m\overline n/q}\,\mathfrak M=\mathfrak N_{m\overline n/q}$, so that
$$
	\widetilde{\mathcal C}'(\mathfrak N_{m\overline n/q}-z\widetilde{\mathcal A}')^{-1}\widetilde{\mathcal B}'\,=\,
	\widetilde{\mathcal C}(\mathfrak N_{m\overline n/q}-z\widetilde{\mathcal A})^{-1}\widetilde{\mathcal B},
$$
i.e., the central system given by \eqref{eq:Dapprox}--\eqref{eq:App1D} has the same harmonic transfer function \eqref{eq:GcircCompress}; thus \eqref{eq:Dapprox}--\eqref{eq:App1D} is also an $(m/q,n/q)$-multirate realization of the optimal approximant $\downarrow_q\!\mathcal G^\circ\!\uparrow_q$.
\end{proof}

In general, due to the definitions of $\mathfrak M$ and the Toeplitz transform, the similarity conditions \eqref{eq:freqsim} on the Toeplitz transforms corresponds to the following similarity conditions in the time domain, for $t=0,\ldots,T-1$:
$$
	B'_t=M^{-1}B_t,\quad A'_t=M^{-1}A_tM
	\quad\text{and}\quad C'_t=C_tM.
$$

Combining Theorem \ref{thm:Approx} with Theorem \ref{thm:NewOpt}, we can now conclude that a $(\widehat c\,\overline m,\widehat c\,\overline n)$-multirate optimal approximation of a $(c\overline m,c\overline n)$-multirate system can in fact be taken to be $(\widetilde c\,\overline m,\widetilde c\,\overline n)$-multirate, where $0<\widehat c<c$ and $\widetilde c=\mathrm{gcd}\,\,(c,\widehat c)$, because we know how to construct a state-space realization of $\downarrow_{c/\widetilde c}\!\mathcal G\!\uparrow_{c/\widetilde c}$. Recall that we defined an $(m,n)$-multirate system in terms of its state-space representation in \S\ref{sec:multirate}.

\section{Final remarks and acknowledgments}

A frequency-response interpretation of the harmonic transfer function is discussed extensively in \cite{ChQiu97}, and there is an extension to possibly non-periodic time-varying systems in \cite{BGK95}.

In \cite{FFK02}, the $H^2$ norm is extended to operator-valued functions in a way differing from this paper, by using an $H^2$ space of functions whose values are bounded linear operators between separable Hilbert spaces. There the authors also introduce the so-called $\mathcal P_m$ norm, which in a sense interpolates between the $H^2$ and $H^\infty$ norms, containing the latter two as the special cases $m=1$ and $m\to\infty$, respectively.

An algebraically analogous investigation could be carried out for the continuous-time case, see \cite{WereleyPhd, BiCoBook}, but in the continuous-time case, the Fourier analysis becomes more involving \cite{ZH02,ZhouPhd} even in the finite-dimensional case, because the Toeplitz matrices then become bi-infinite and convergence issues enter the discussion. Moreover, most of the interesting applications of infinite-dimensional continuous-time systems theory are PDE models, and this implies unboundedness of at least some of the operators in the state-space representation. The theory then changes character, possibly preventing a meaningful unification of the discrete-time and continuous-time cases; see, e.g., \cite{TanabeBook, dPIchi88, ChenWeiss15}.

The author thanks Prof.\ Hannu T.\ Toivonen for inspiring discussions on the topic and pointers to the literature. The research project was in part carried out using funding from the Foundation of Ruth and Nils-Erik Stenb\"ack.

\newcommand{\etalchar}[1]{$^{#1}$}
\def\cprime{$'$} \def\cprime{$'$}
\providecommand{\bysame}{\leavevmode\hbox to3em{\hrulefill}\thinspace}
\providecommand{\MR}{\relax\ifhmode\unskip\space\fi MR }
\providecommand{\MRhref}[2]{%
  \href{http://www.ams.org/mathscinet-getitem?mr=#1}{#2}
}
\providecommand{\href}[2]{#2}

\end{document}